\newcommand{\bbB}{\mathbb{B}}
\newcommand{\bbC}{\mathbb{C}}
\newcommand{\bbN}{\mathbb{N}}
\newcommand{\bbP}{\mathbb{P}}
\newcommand{\bbQ}{\mathbb{Q}}
\newcommand{\bbR}{\mathbb{R}}
\newcommand{\bbS}{\mathbb{S}}
\newcommand{\bbT}{\mathbb{T}}
\newcommand{\calB}{\mathcal{B}}
\newcommand{\calD}{\mathcal{D}}
\newcommand{\calL}{\mathcal{L}}
\newcommand{\calM}{\mathcal{M}}
\newcommand{\calR}{\mathcal{R}}
\newcommand{\calT}{\mathcal{T}}
\newcommand{\N}{\bbN}
\newcommand{\R}{\bbR}
\newcommand{\C}{\bbC}
\newcommand{\Q}{\bbQ}
\DeclarePairedDelimiter{\norm}{\lVert}{\rVert}
\DeclarePairedDelimiter{\abs}{\lvert}{\rvert}
\DeclarePairedDelimiter{\dual}{\langle}{\rangle}
\DeclarePairedDelimiter{\set}{\lbrace}{\rbrace}
\DeclareMathOperator{\id}{id}
\DeclareMathOperator{\one}{\mathbbm{1}}
\DeclareMathOperator{\dom}{dom}
\newcommand{\ue}{\mathrm{e}}
\newcommand{\ui}{\mathrm{i}}
\newcommand{\ud}{\mathrm{d}}
\DeclareMathOperator{\diag}{diag}
\theoremstyle{definition}
\newtheorem{definition}{Definition}[section]
\newtheorem{remark}[definition]{Remark}
\theoremstyle{plain}
\newtheorem{proposition}[definition]{Proposition}
\newtheorem{lemma}[definition]{Lemma}
\newtheorem{theorem}[definition]{Theorem}
\newtheorem{corollary}[definition]{Corollary}
\numberwithin{equation}{section}
\begin{document}

\normalem

\title[Network Transport]{On the asymptotic behaviour of semigroups for flows in infinite networks}
%

\author{Alexander Dobrick}
\address{Alexander Dobrick, Arbeitsbereich Analysis, Christian-Albrechts-Universit\"at zu Kiel, Ludewig-Meyn-Str.\ 4, 24098 Kiel, Germany}
\email{dobrick@math.uni-kiel.de}

\subjclass[2010]{35B40; 47D06; 35R02; 35F46}
\keywords{transport equations; infinite metric graphs; long-term behaviour; operator semigroups}
\date{\today}
\begin{abstract}
We study transport processes on infinite networks. The solution of these processes can be modeled by an operator semigroup on a suitable Banach space. Classically, such semigroups are strongly continuous and therefore their asymptotic behaviour is quite well understood. However, recently new examples of transport processes emerged where the corresponding semigroup is not strongly continuous. Due to this lack of strong continuity, there are currently only few results on the long-term behaviour of these semigroups. In this paper, we discuss the asymptotic behaviour for a certain class of these transport processes. In particular, it is proved that the solution semigroups behave asymptotically periodic with respect to the operator norm as a consequence of a more general result on the long-term behaviour by positive semigroups containing a multiplication operator. Furthermore, we revisit known results on the asymptotic behaviour of transport processes on infinite networks and prove the asymptotic periodicity of their extensions to the space of bounded measures. 
\end{abstract}

\maketitle

\section{Introduction}
Consider a transport process on an infinite network, modeled by an infinite, directed graph $G = (V, E)$, which is assumed to be simple, locally finite and non-degenerate. Moreover, one can consider $G$ as a metric graph by identifying each edge with the unit interval $[0, 1]$ and parametrizing it contrarily to its direction (see Sections~\ref{section:preliminaries}~and~\ref{section:applications-to-transport-equations-in-infinite-networks} for definitions).

The distribution of mass transported along one edge $e_j$, $j \in J \subseteq \N$, at some time $t \geq 0$ is described by a function $u_j(t, x)$ for $x \in [0, 1]$ and the material is transported along $e_j$ with a constant velocity $c_j > 0$ satisfying
\begin{align*}
0 < c_{\min} \leq c_j \leq c_{\max} < \infty.
\end{align*}
Define $\bbB^C := C^{-1} \bbB C$, where $\bbB$ denotes the weighted (transposed) adjacency matrix of the graph $G$ and $C := \diag(c_j)$ denotes the (diagonal) velocity matrix. In addition, suppose that the functions $u_j$ satisfy the generalized Kirchhoff law 
\begin{align*}
\sum_{j \in J} \phi^-_{ij} c_j u_j(1, t) = \sum_{j \in J} \phi^+_{ij} c_j u_j(0, t)
\end{align*}
for all $i \in I$ and $t > 0$. Then the transport process can be modeled by the initial value problem
\begin{align} \label{eq:intro-network-transport-equation}
\left\lbrace 
\begin{aligned}
\frac{\partial}{\partial t} u_j(t, x) &= c_j \frac{\partial}{\partial x} u_j(t, x), &&\qquad x \in (0, 1), \, t \geq 0, \\
u_j(0, x) &= f_j(x), &&\qquad x \in (0, 1),\\
u_j(1, t) &= \sum_{k \in J} \bbB^C_{jk} u_k(0, t), &&\qquad t \geq 0,\\
\end{aligned}
\right.
\end{align}
where $f_j$, $j \in J$, are the initial distributions of mass along the edges of $G$.

The investigation of systems of the form \eqref{eq:intro-network-transport-equation} on metric graphs by methods from the theory of strongly continuous semigroups has a history. The first results in this direction are due to Kramar and Sikolya (see \cite{Kramar2005}). This paper was followed by a series of papers \cite{Engel2008, Dorn2008, Dorn2009, Dorn2010, Bayazit2013} from several different authors studying various transport processes. In all these papers, the state space is $L^1([0, 1]; \ell^1)$, where the solution semigroups turn out to be strongly continuous, and, the asymptotic behaviour of the solution semigroup is a major point of interest.

Motivated by results from \cite{Below2005, Below2009}, Budde and Kramar Fijav\v{z} in \cite{Budde2019} discuss transport processes on the state space $L^\infty([0, 1]; \ell^1)$. In this setting the solution semigroup is bi-continuous with respect to the weak$^\ast$-topology on $L^\infty([0, 1]; \ell^1)$ (see \cite{Kuehnemund2003} for a definition) but not strongly continuous. However, the asymptotic behaviour of the solutions in the bi-continuous case is not discussed.

In the present paper, we shall discuss the asymptotic behaviour in the bi-continuous case (see Theorem~\ref{theorem:asymptotic-periodicity-mult-op-semigroup-lp}) by combining spectral theoretic observations, the concept of the semigroup at infinity (see \cite{Dobrick2020}) and classical Perron-Frobenius theory. In particular, our approach does not make use of any regularity assumptions on the semigroup. Moreover, it allows us to revisit a result on the asymptotic behaviour from \cite{Dorn2008} (see Proposition~\ref{proposition:l1-asymptotics-simple-case}) and improve slightly upon the statement as well as its proof. 

Finally, inspired by \cite{Matrai2007}, we extend the semigroup on $L^1([0, 1]; \ell^1)$ from \cite{Dorn2008} to the space of $\ell^1$-valued measures of bounded variation and show that their extension has still some regularity and converges with respect to operator norm exponentially fast to a periodic semigroup.

\subsection*{Structure of the article}
In Section~\ref{section:preliminaries} we first recall some basic facts from graph theory and on metric graphs. Moreover, we discuss vector-valued multiplication operators on $L^p(\Omega; E)$ for a given Banach space $E$ and a given $\sigma$-finite measure space $(\Omega, \Sigma, \mu)$, and prove some spectral theoretic results. In Section~\ref{section:abstract-asymptotics-result} we utilize our spectral observations from Section~\ref{section:preliminaries} to prove an abstract result on the long-term behaviour for positive semigroups  containing a multiplication operator. In Section~\ref{section:applications-to-transport-equations-in-infinite-networks} we apply that result to several different transport semigroups on infinite networks and we prove that, under certain conditions, these transport semigroups are asymptotically periodic with respect to the operator norm. Finally, in Section~\ref{section:applications-to-transport-equations-in-infinite-networks-measures} we extend the solution semigroup of \eqref{eq:intro-network-transport-equation} from the state space of $\ell^1$-valued $L^1$-functions to the space of $\ell^1$-valued measures of bounded variation and prove that this extension is asymptotically periodic with respect to the operator norm. 

\subsection*{Notation and terminology}
Let $E,F$ be Banach spaces. We endow the space $\calL(E;F)$ of bounded linear operators from $E$ to $F$ with the operator norm topology throughout; moreover, we use the abbreviation $\calL(E) := \calL(E;E)$. Further a \textit{semigroup} is a map $T: [0, \infty) \to \calL(E)$ with $T(0) = \id_E$ satisfying the \textit{semigroup law} $T(s + t) = T(s) T(t)$ for all $s, t \geq 0$. In this case, we call $(T(t))_{t \geq 0}$ an \textit{operator semigroup}. Note that we do not make any assumption on the regularity of $(T(t))_{t \geq 0}$. An operator $T \in \calL(E)$ is called \textit{quasi-compact} if it has less than distance one in the operator norm to the space of compact operators on $E$. The dual Banach space of $E$ will be denoted by $E'$. If $(\Omega, \Sigma, \mu)$ is a measure space, then $L^p(\Omega; E)$, $1 \leq p \leq \infty$, denotes the space of $L^p$-Bochner-integrable functions with values in $E$. For the definition of the space $M(\Omega; E)$ we refer to Appendix~\ref{subsection:vector-valued-measures}. For $f \in L^p(\Omega)$ and $x \in E$ we define $f \otimes x \in L^p(\Omega; E)$ by $(f \otimes x)(s) \coloneqq f(s) x$ for almost all $s \in \Omega$. 

Throughout the paper, we will make free use of the theory of real and complex Banach lattices (cf.\ \cite{Meyer-Nieberg1991, Schaefer1974, Zaanen1997}). Here we only recall that if $E$ is a Banach lattice, then an operator $T: E \to E$ is called \textit{positive} if $Tf \geq 0$ for all $0 \leq f \in E$ and \textit{strictly positive} if $Tf > 0$, whenever $0 < f \in E$. Moreover, $T$ is said to be \textit{irreducible} if for each $0 < f \in E$ and $0 < f' \in E'$ there exists $n \in \N$ such that $\dual{T^n f, f'} > 0$.  

If the underlying scalar field is complex, the spectrum of a linear operator \linebreak $A: E \supseteq D(A) \to E$ will be denoted by $\sigma(A)$; for $\lambda \in \bbC \setminus \sigma(A)$, the resolvent of $A$ at $\lambda$ is denoted by $R(\lambda, A) := (\lambda - A)^{-1}$. Further, the point spectrum of $A$ will be denoted by $\sigma_p(A)$. If $T$ is a bounded operator on $E$, then $r(T)$ denotes the spectral radius of $T$. Finally, the set $\set{\lambda \in \sigma(T) : \abs{\lambda} = r(T)}$  is called the \textit{peripheral spectrum} of the operator $T$. 

\subsection*{Acknowledgements}

The author thanks Abdelaziz Rhandi for his very kind hospitality at the university of Salerno in Spring 2020 at the begin of the extraordinary Covid-19 situation in Italy. At this time major parts of this paper have been written. Moreover, I am indebted to my dear colleagues Marco Peruzzetto for several helpful discussions and Jochen Glück for helpful feedback at an early stage of the paper and bringing reference \cite{Matrai2007} to my attention. Finally, I thank Markus Haase for his many fruitful suggestions.

\section{Preliminaries} \label{section:preliminaries}

\subsection{Basic Graph Theory}

A \textit{directed graph} $G$ is a pair $(V, E)$, where $V = \set{v_i : i \in I}$ is a \textit{set of vertices} and $E = \set{e_j : j \in J} \subseteq V \times V$ a \textit{set of directed edges} for index sets $I, J \subseteq \N$. If $e = (v_i, v_j) \in E$ is some directed edge, then $v_i$ is called the \textit{tail} and $v_j$ is called the \textit{head} of $e$. In particular, $e$ is considered to be directed from $v_i$ to $v_j$ and we call $e$ and \textit{outgoing edge} with respect to $v_i$ and analogously an \textit{incoming edge} with respect to $v_j$.

A very natural way to describe the structure of graphs are \textit{incidence matrices}. The \textit{incoming incidence matrix} $\Phi^+ =(\phi_{i j}^+)_{i \in I, j \in J}$ is defined by
\begin{align*}
\phi_{i j}^+ := 
\begin{cases}
1, &\quad  \text{if } v_i \text{ is the head of } e_j, \\
0, &\quad \text{else},
\end{cases}
\end{align*}
and the \textit{outgoing incidence matrix} $\Phi^- =(\phi_{i j}^-)_{i \in I, j \in J}$ by 
\begin{align*}
\phi_{i j}^- := 
\begin{cases}
1, &\quad  \text{if } v_i \text{ is the tail of } e_j, \\
0, &\quad \text{else}.
\end{cases}
\end{align*}
As the graph $G$ will be assumed to be \textit{weighted} in what follows, we consider also the \textit{weighted outgoing incidence matrix} $\Phi_w^- =(\phi_{w, i j}^-)_{i \in I, j \in J}$ which is defined by
\begin{align*}
\phi_{w, i j}^- := 
\begin{cases}
w_{ij}, &\quad  \text{if } v_i \text{ is the tail of } e_j, \\
0, &\quad \text{else}.
\end{cases}
\end{align*}
Here $w_{ij} \in [0, 1]$ are weights for all $i \in I$ and $j \in J$ such that 
\begin{align} \label{eq:conservative-weights}
\sum_{j \in J} w_{ij} = 1.
\end{align}
This condition ensures that no material of the network flow is absorbed by the network.

Another way to describe the structure of a given graph is the \textit{adjecency matrix}. For our approach \textit{the weighted (transposed) adjacency matrix} $\bbB = (\bbB_{i, j})_{i \in I, j \in J}$ graph defined by $\bbB := (\Phi_w^-)^T \Phi^+$ will play the most prominent role. Its entries are given by
\begin{align*}
\bbB_{ij} = 
\begin{cases}
w_{ki}, &\quad  \text{if } v_k \text{ is the head of } e_j \text{ and the tail of } e_i, \\
0, &\quad \text{else}.
\end{cases}
\end{align*}
By positivity of the weights and \eqref{eq:conservative-weights}, $\bbB$ is a column-stochastic matrix and thus corresponds to a stochastic operator on $\ell^1$ with $r(\bbB) = \norm{\bbB} = 1$. \smallskip

A \textit{directed path} $p$ from the vertex $v \in V$ to the vertex $w \in V$ is a tuple $(e_1, \dots, e_\ell)$ of edges $e_1, \dots, e_\ell \in E$ such that the tail of $e_1$ is $v$, the head of $e_\ell$ is $w$ and the head of $e_i$ is the tail of $e_{i + 1}$ for all $i = 1, \dots, \ell - 1$. In this case, $v$ is called the \textit{starting point} and $w$ the \textit{endpoint} of $p$ and $\ell$ is called the \textit{length} of $p$.
A directed graph $G = (V ,E)$ is called \textit{strongly connected} if for any $v, w \in V$ there is a directed path of finite length between them. 

A finite vertex set $W \subseteq V$ is called an \textit{attractor} if there exist $L \in \N$ and $\delta > 0$ such that for each vertex $v \in V$ there is a path $p$ with head $v$ of length at most $L$ that has an endpoint if $W$ and such that the sum of the weights of all paths with this property is greater or equal to $\delta$. Here the \textit{weight of a path} $p = (e_1, \dots, e_\ell)$ is defined by the product of all weights belonging to its edges $e_i$. 

Many graph-theoretic properties of $G = (V, E)$ translate into properties of the operator $\bbB$ and vice versa. The following proposition collects some of these correspondences (cf.\ \cite[Proposition~4.8, Proposition~4.9]{Dorn2008}) that are relevant to this paper.

\begin{proposition} \label{proposition:graph-properties-vs-bbB-properties}
Let $G$ be a locally finite graph. Then the following assertions hold:
\begin{enumerate}[\upshape (i)]
\item $G$ is strongly connected if and only if the adjacency matrix $\bbB$ is irreducible. 
\item $G$ has an attractor if and only if the adjacency matrix $\bbB$ is quasi-compact. 
\end{enumerate}
\end{proposition}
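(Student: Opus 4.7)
The plan is to pass in both parts from the graph $G$ to the weighted line graph that is encoded by $\bbB$ acting on $\ell^1(J)$, and then translate the graph-theoretic property on the left-hand side into an operator-theoretic property of $\bbB$. A key computation to have ready is that for every $n \in \N$ the entry $(\bbB^n)_{ij}$ equals the sum of the weights of all directed paths of length $n+1$ in $G$ starting at the edge $e_j$ and ending at the edge $e_i$. This is a direct induction from the definition $\bbB = (\Phi_w^-)^T \Phi^+$ and the factorization of $\bbB^n$ over the intermediate head/tail vertices.

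For part~(i) I would first assume that $G$ is strongly connected. Given $0 < f \in \ell^1$ and $0 < f' \in (\ell^1)' = \ell^\infty$, pick indices $j, i \in J$ with $f_j > 0$ and $f'_i > 0$. By strong connectivity there is a directed path in $G$ from the head of $e_j$ to the tail of $e_i$, which extends to a path from $e_j$ to $e_i$ of some length $n+1$ and therefore yields $(\bbB^n)_{ij} > 0$. Positivity of $\bbB$ then gives $\dual{\bbB^n f, f'} \geq f_j f'_i (\bbB^n)_{ij} > 0$, so $\bbB$ is irreducible. For the converse, testing irreducibility with the unit vectors $f = \delta_j$ and $f' = \delta_i$ produces, for each pair $(i,j)$, an $n$ with $(\bbB^n)_{ij} > 0$, hence a directed edge-path in $G$; non-degeneracy of $G$ ensures every vertex is incident to some edge, so this passes to strong connectivity on the vertex level.

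For part~(ii) I would use the characterization of quasi-compactness via the essential spectral radius, i.e.\ $\sprEss(\bbB) < r(\bbB) = 1$. If $G$ has an attractor $W$ with parameters $L, \delta$, let $W_E \subset E$ be the finite set of edges having their head in $W$; this is finite by local finiteness of $G$. Let $P$ be the band projection in $\ell^1(J)$ onto $\ell^1(W_E)$ and set $K := P \bbB^L$, which is finite rank. The attractor condition translates, via the path-weight interpretation of $\bbB^L$, into the statement that for each basis vector $\delta_j$ at least a proportion $\delta$ of the mass of $\bbB^L \delta_j$ lies in $W_E$, so $\norm{(I-P)\bbB^L \delta_j} \leq 1 - \delta$. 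By positivity and the lattice property of $\ell^1$ this extends to $\norm{\bbB^L - K} \leq 1 - \delta < 1$, whence $\bbB$ is quasi-compact. The converse proceeds by contraposition: absent any attractor, one constructs $\ell^1$-normalized positive vectors whose $\bbB^n$-orbit loses mass only to sets of unbounded combinatorial ``distance'', so no finite-rank approximation can get within distance $1$ of $\bbB^n$ for any $n$, contradicting quasi-compactness.

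The main obstacle I expect is the converse direction of~(ii), where the purely combinatorial absence of an attractor has to be converted into a quantitative lower bound on $\dist(\bbB^n, \calK(\ell^1))$; this requires a careful escape-to-infinity argument in the line graph and is the reason the author refers to \cite[Propositions~4.8, 4.9]{Dorn2008}. The forward direction of~(ii) and both directions of~(i), by contrast, are straightforward translations via the path interpretation of $\bbB^n$ described above.
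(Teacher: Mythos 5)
First, a remark on the benchmark: the paper itself gives no proof of this proposition but quotes it from \cite[Propositions~4.8 and~4.9]{Dorn2008}, so your attempt has to stand on its own. Your part~(i) is fine: the identification of $(\bbB^n)_{ij}$ with the total weight of edge-paths from $e_j$ to $e_i$, the passage from strong connectivity to $\dual{\bbB^n f, f'}>0$ via positivity, and the converse using the unit vectors together with non-degeneracy are exactly the right ingredients.

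Part~(ii), however, contains a genuine gap in the forward direction and no proof at all in the backward direction. The quantitative claim on which your forward argument rests --- that $\norm{(I-P)\bbB^L\delta_j}\le 1-\delta$, i.e.\ that a proportion $\delta$ of the mass of $\bbB^L\delta_j$ sits on the edges incident to $W$ --- does not follow from the definition of an attractor and is false in general: the attractor condition controls the weight of paths of length \emph{at most} $L$ ending in $W$, and mass that reaches $W$ after two steps may have left the edges around $W$ entirely by step $L$ (take $L$ large and a graph in which every vertex has an edge of weight $\delta$ into $W$ but all edges leaving $W$ move away from it; then $\bbB^L\delta_j$ can charge $W_E$ with arbitrarily little mass). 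The repair is to split $\bbB=\bbB_1+\bbB_2$ with $\bbB_1=P\bbB$, expand $\bbB^L=(\bbB_1+\bbB_2)^L$, note that every summand containing a factor $\bbB_1$ is finite rank (finite-rank operators form an ideal), and observe that $\bbB_2^L$ encodes precisely the paths that \emph{avoid} $W$ for $L$ consecutive steps, whose total weight the attractor condition bounds by $1-\delta$ via column-stochasticity and a first-hitting-time decomposition. A further small point: with the paper's definition of locally finite (finitely many \emph{outgoing} edges) the set of edges with head in $W$ need not be finite, so the finite-rank part must be set up through the outgoing edges of $W$ or under a finite in-degree hypothesis. Finally, for the converse you only describe the desired escape-to-infinity argument without constructing the witnesses that keep $\bbB^n$ at distance $\ge 1$ from the compact operators; acknowledging that this is the hard step and deferring to \cite{Dorn2008} does not constitute a proof, so as written part~(ii) is incomplete in both directions.
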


\subsection{Vector-valued multiplication operators}
Let $T$ be a bounded operator on a Banach space $E$, $(\Omega, \Sigma, \mu)$ a $\sigma$-finite measure space and $1 \leq p \leq \infty$. Consider the operator-valued multiplication operator
\begin{align*}
\calM_T: L^p(\Omega; E) \to L^p(\Omega; E), \quad (\calM_T f)(s) := Tf(s).
\end{align*}
It is easy to see that this operator is well-defined and it is easy to see that $\calM_T$ is a bounded operator. Moreover, the map
\begin{align*}
\calM: \calL(E) \to \calL(L^p(\Omega; E)), \quad T \mapsto M_T
\end{align*}
is an isometric, unital algebra homomorphism (cf.\ \cite[Proposition~2.2.14]{Thomaschewski2003} for a more general result). In particular, projections are mapped to projections by $\calM$. Finally, in the case that $E$ is a Banach lattice, $\calM_T$ is positive if and only if $T$ is positive and $\calM$ maps strictly positive operators to strictly positive operators. 

The next lemma lists some rather obvious properties.

\begin{lemma} \label{lemma:mult-op-invariant-subspaces}
Let $T$ be a bounded operator on a Banach space $E$. Then the following assertions hold:
\begin{enumerate}[\upshape (i)]
\item If $F$ is a closed, $T$-invariant subspace of $E$, then $L^p(\Omega; F)$ is a closed $\calM_T$-invariant subspace of $L^p(\Omega; E)$ and $\calM_{T \rvert_F} = \calM_{T}\rvert_{L^p(\Omega; F)}$.
\item If $E$ decomposes as $E = E_1 \oplus E_2$ into closed $T$-invariant subspaces $E_1$ and $E_2$, then $L^p(\Omega; E)$ decomposes as $L^p(\Omega; E) = L^p(\Omega; E_1) \oplus L^p(\Omega; E_2)$ into closed $\calM_T$-invariant subspaces and $\calM_T = \calM_{T\rvert_{E_1}} \oplus \calM_{T\rvert_{E_2}}$. 
\end{enumerate}
\end{lemma}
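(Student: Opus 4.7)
The statement is a bookkeeping lemma about how vector-valued multiplication operators interact with invariant subspace decompositions of the underlying Banach space; accordingly, the plan is just to unfold the definitions and piggyback on properties of Bochner spaces and on the algebra homomorphism $\calM$ that was described immediately before the lemma.

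For part (i), my plan is to argue in two steps. First, I will note that $L^p(\Omega; F)$ embeds isometrically as a closed subspace of $L^p(\Omega; E)$: any Bochner-measurable $F$-valued function is Bochner-measurable as an $E$-valued function (composition with the inclusion $F \hookrightarrow E$), and the $p$-norms agree; closedness of $L^p(\Omega; F)$ inside $L^p(\Omega; E)$ then follows from closedness of $F$ in $E$ together with the fact that a.e.-pointwise limits of $F$-valued functions remain $F$-valued. Second, given $f \in L^p(\Omega; F)$, the definition $(\calM_T f)(s) = T f(s)$ lands in $F$ for a.e.\ $s$ by $T$-invariance of $F$, so $\calM_T$ restricts to an operator on $L^p(\Omega; F)$, and by construction this restriction is nothing but $\calM_{T\rvert_F}$.

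For part (ii), I plan to exploit the algebra homomorphism structure of $\calM$. Let $P_1, P_2 \in \calL(E)$ be the bounded projections associated with the topological direct sum $E = E_1 \oplus E_2$, so that $P_1 + P_2 = \id_E$, $P_i^2 = P_i$, $P_1 P_2 = P_2 P_1 = 0$, and $\operatorname{ran}(P_i) = E_i$. Applying $\calM$, which is unital and multiplicative, yields projections $\calM_{P_1}, \calM_{P_2}$ on $L^p(\Omega; E)$ with $\calM_{P_1} + \calM_{P_2} = \id$ and $\calM_{P_1} \calM_{P_2} = 0$. Identifying the ranges via (i) gives $\operatorname{ran}(\calM_{P_i}) = L^p(\Omega; E_i)$, hence the claimed direct sum decomposition $L^p(\Omega; E) = L^p(\Omega; E_1) \oplus L^p(\Omega; E_2)$. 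Each summand is $\calM_T$-invariant by (i), and the restriction of $\calM_T$ to $L^p(\Omega; E_i)$ coincides with $\calM_{T\rvert_{E_i}}$, again by (i); assembling these restrictions gives $\calM_T = \calM_{T\rvert_{E_1}} \oplus \calM_{T\rvert_{E_2}}$.

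The only place where anything nontrivial is being used is the identification $L^p(\Omega; F) \subseteq L^p(\Omega; E)$ as a closed subspace, which I expect to cite rather than prove from scratch (it is the standard compatibility between Bochner integration and closed subspaces). Everything else is formal manipulation of projections through the isometric unital algebra homomorphism $\calM$, so I do not anticipate any genuine obstacle; the proof should fit in a few lines.
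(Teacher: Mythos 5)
Your proof is correct; the paper itself states this lemma without proof (calling it ``fairly easy to prove''), and your argument --- identifying $L^p(\Omega;F)$ as a closed subspace of $L^p(\Omega;E)$, checking invariance pointwise a.e., and pushing the bounded projections $P_1,P_2$ through the unital algebra homomorphism $\calM$ --- is exactly the standard route the author has in mind, matching how the paper identifies $\calM_P X = L^p(\Omega;PE)$ in the proof of Proposition~\ref{proposition:spectral-projection-multiplication-operator}. No gaps.
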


Now we turn to spectral theoretic results. A simple calculation shows that $\sigma(T) = \sigma(\calM_T)$ with
\begin{align*}
R(\lambda, \calM_T) = \calM_{R(\lambda, T)} \qquad (\lambda \in \rho(T)). 
\end{align*}

Recall that in case the spectrum of $T$ contains isolated subsets, one can define spectral projection of $T$ by virtue of the Dunford functional calculus: Namely, for any isolated subset $\sigma$ of $\sigma(T)$, the spectral projection $P_\sigma$ of $T$ associated to $\sigma$ is
\begin{align*}
P_\sigma := \frac{1}{2 \pi \ui} \int_\gamma R(z, T) \, \ud z,
\end{align*} 
where $\gamma$ is a contour in $\rho(T)$ such that the winding number around each element of $\sigma$ is equal to $1$ and around each element of $\sigma(T) \setminus \sigma$ is equal to $0$. Note that such a contour always exists and that Dunford's integral does not depend on the choice of the contour by Cauchy's theorem. 

In the situation above, as the spectra of $T$ and $\calM_T$ coincide, it is natural to ask, whether the spectral projections of $T$ and $\calM_T$ associated to $\sigma$ are related to each other. This is the content of the following result.

\begin{proposition} \label{proposition:spectral-projection-multiplication-operator}
Let $(\Omega, \Sigma, \mu)$ be a $\sigma$-finite measure space, $E$ a Banach space and $X := L^p(\Omega; E)$, $1 \leq p \leq \infty$. Let $T$ be a bounded operator on $E$, let $\sigma$ be an isolated subset of $\sigma(T)$ and $P$ the spectral projection of $T$ associated to $\sigma$. Then the following assertions hold:
\begin{enumerate}[\upshape (i)]
\item $X$ decomposes as $X = \calM_P X \oplus \ker \calM_P$ into the $\calM_T$-invariant subspaces $\calM_P X = L^p(\Omega; PE)$ and $\ker \calM_P = L^p(\Omega; \ker P)$.
\item The spectral projection of $\calM_T$ associated to $\sigma$ coincides with the operator-valued multiplication operator $\calM_P$.
\end{enumerate}
\end{proposition}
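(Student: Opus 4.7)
The strategy is to derive both parts from two facts recalled just before the statement: $\calM$ is a unital isometric algebra homomorphism, and $R(z, \calM_T) = \calM_{R(z, T)}$ for every $z \in \rho(T) = \rho(\calM_T)$. In particular, $\calM$ is bounded linear and therefore commutes with Bochner integration of operator-valued curves.

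For (i), I would first note that, since $P$ is a projection commuting with $T$, the spaces $PE$ and $\ker P$ are closed $T$-invariant subspaces of $E$ with $E = PE \oplus \ker P$. An immediate application of Lemma~\ref{lemma:mult-op-invariant-subspaces}(ii) then yields the decomposition $L^p(\Omega; E) = L^p(\Omega; PE) \oplus L^p(\Omega; \ker P)$ into $\calM_T$-invariant summands. It remains to identify these summands with $\calM_P X$ and $\ker \calM_P$, respectively. Both identifications are pointwise: for $f \in X$ one has $(\calM_P f)(s) = P f(s) \in PE$ almost everywhere, so $\calM_P X \subseteq L^p(\Omega; PE)$; conversely, any $g \in L^p(\Omega; PE)$ satisfies $P g(s) = g(s)$ almost everywhere because $P$ acts as the identity on $PE$, and therefore $g = \calM_P g$ lies in $\calM_P X$. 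The argument for $\ker \calM_P = L^p(\Omega; \ker P)$ is analogous, using that $P$ vanishes on $\ker P$.

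For (ii), my plan is to push the Dunford integral through the homomorphism $\calM$. Let $\gamma$ be a contour in $\rho(T)$ admissible for $\sigma$; since $\sigma(T) = \sigma(\calM_T)$, the same $\gamma$ is admissible for $\calM_T$. Using that $\calM$ is bounded linear and therefore commutes with the Bochner integral, together with the resolvent identity recalled above, I would compute
\begin{align*}
\calM_P \;=\; \calM\!\left(\frac{1}{2\pi \ui} \int_\gamma R(z, T)\, \ud z\right) \;=\; \frac{1}{2\pi \ui} \int_\gamma \calM_{R(z, T)}\, \ud z \;=\; \frac{1}{2\pi \ui} \int_\gamma R(z, \calM_T)\, \ud z,
\end{align*}
and the right-hand side is, by definition, the spectral projection of $\calM_T$ associated to $\sigma$.

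The main obstacle I anticipate is justifying the interchange of $\calM$ with the contour integral. This is routine once one recalls that bounded linear maps commute with Bochner integrals of norm-continuous integrands and that $z \mapsto R(z, T)$ is holomorphic, hence norm-continuous, on the compact curve $\gamma$. Beyond this, the argument is essentially bookkeeping.
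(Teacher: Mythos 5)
Your proof is correct. Part (i) is essentially identical to the paper's argument: apply Lemma~\ref{lemma:mult-op-invariant-subspaces}(ii) to the decomposition $E = PE \oplus \ker P$ and then identify the two summands with $\calM_P X$ and $\ker\calM_P$ pointwise. For part (ii), however, you take a genuinely different route. The paper never touches the Dunford integral directly; instead it uses (i) to exhibit a decomposition of $X$ into $\calM_T$-invariant subspaces, computes $\sigma(\calM_T\rvert_{\calM_P X}) = \sigma$ and $\sigma(\calM_T\rvert_{\ker\calM_P}) = \sigma(T)\setminus\sigma$ via Lemma~\ref{lemma:mult-op-invariant-subspaces}(i) and the identity $\sigma(\calM_S)=\sigma(S)$, and then appeals to the uniqueness of the spectral projection associated with such a spectral decomposition. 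You instead pull the contour integral through the homomorphism $\calM$, using its boundedness and the identity $R(z,\calM_T)=\calM_{R(z,T)}$; the interchange is indeed routine since $z\mapsto R(z,T)$ is norm-continuous on the compact contour $\gamma\subseteq\rho(T)=\rho(\calM_T)$. Your computation has the advantage of being entirely independent of part (i) and of any uniqueness statement, and it transfers verbatim to the $M(\Omega;E)$ setting of Remark~\ref{remark:multiplication-operators-on-measures}, where $\calM$ is still a bounded algebra homomorphism even though it is no longer isometric; the paper's route has the mild advantage of never needing to discuss commuting a linear map with an operator-valued integral. Both arguments are complete and standard.
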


\begin{proof}[Proof~of~Proposition~\ref{proposition:spectral-projection-multiplication-operator}]
Although the following proof is almost trivial, we present it here for the convenience of the reader:

(i) $E$ can be decomposed as $E = PE \oplus \ker P$. Moreover, $PE$ and $\ker P$ are $T$-invariant. Therefore, Lemma~\ref{lemma:mult-op-invariant-subspaces}(ii) yields the decomposition
\begin{align*}
X = L^p(\Omega; PE) \oplus L^p(\Omega; \ker P)
\end{align*}
into $\calM_T$-invariant subspaces. Finally, we have $f \in \calM_P X$ ($f \in \ker \calM_P$) if and only if $f(\omega) \in P E$ ($f(\omega) \in \ker P$) for a.e. $\omega \in \Omega$. Therefore, $\calM_P X = L^p(\Omega; PE)$ and $\ker \calM_P = L^p(\Omega; \ker P)$.

(ii) By (i), one has the decomposition $X = \calM_P X \oplus \ker \calM_P$ into $\calM_T$-invariant subspaces. Furthermore, 
\begin{align*}
\sigma(\calM_T\rvert_{\calM_P X}) &= \sigma(\calM_T\rvert_{L^p(\Omega; PE)}) = \sigma(\calM_{T\rvert_{PE}}) = \sigma(T\rvert_{PE}) = \sigma, \\
\sigma(\calM_T\rvert_{\ker \calM_P}) &= \sigma(\calM_T\rvert_{L^p(\Omega; \ker P)}) = \sigma(\calM_{T\rvert_{\ker P}}) = \sigma(T\rvert_{\ker P}) = \sigma(T) \setminus \sigma,
\end{align*}
by Lemma~\ref{lemma:mult-op-invariant-subspaces}(i). Thus, the claim follows from the uniqueness of the spectral projections on $\sigma$ and $\sigma(T) \setminus \sigma$ of $\calM_T$, respectively.
\end{proof}

The next result is concerned with isolated points of the spectrum of operator-valued multiplication operators and their spectral properties. The first assertion in the following theorem is a special case of \cite[Theorem~6]{Heymann2014}.

\begin{proposition} \label{prop:mult-op-spectral-properties}
Let $(\Omega, \Sigma, \mu)$ be a $\sigma$-finite measure space, let $E$ be a Banach space, $X := L^p(\Omega; E)$, $1 \leq p \leq \infty$ and let $T \in \calL(E)$. Then the following assertions hold:
\begin{enumerate}[\upshape (i)]
\item $\sigma_p(T) = \sigma_p(\calM_T)$.
\item If $\lambda$ is an eigenvalue of $T$, then its geometric (algebraic) multiplicity with respect to $T$ is less or equal to its geometric (algebraic) multiplicity with respect to $\calM_T$. 
\item $\lambda$ is a pole of order $k$ of the resolvent of $T$ if and only if $\lambda$ is a pole of order $k$ of the resolvent of $\calM_T$. 
\end{enumerate}
\end{proposition}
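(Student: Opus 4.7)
The plan rests on the fact, already established in the text, that $\calM : \calL(E) \to \calL(X)$ is an isometric, unital algebra homomorphism and that $R(z, \calM_T) = \calM_{R(z, T)}$ on $\rho(T) = \rho(\calM_T)$. Granted these, each item becomes essentially bookkeeping; the pointwise structure of $L^p(\Omega; E)$ does the rest.

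For (i), I would argue two inclusions. Fix any nonzero $f \in L^p(\Omega)$, which exists by $\sigma$-finiteness (the degenerate case of a trivial measure space is vacuous). If $Tx = \lambda x$ with $0 \neq x \in E$, then $\calM_T(f \otimes x) = \lambda(f \otimes x)$ and $f \otimes x \neq 0$, so $\lambda \in \sigma_p(\calM_T)$. Conversely, if $\calM_T g = \lambda g$ for $0 \neq g \in X$, then $T g(s) = \lambda g(s)$ for a.e.\ $s$, and since $g$ is not a.e.\ zero some value $g(s)$ is a nonzero eigenvector of $T$.

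For (ii), the algebra homomorphism property gives $(\lambda - \calM_T)^k = \calM_{(\lambda - T)^k}$, and a pointwise argument of the same flavour as in Proposition~\ref{proposition:spectral-projection-multiplication-operator}(i) yields $\ker \calM_S = L^p(\Omega; \ker S)$ for any bounded $S$. Combining these,
\begin{align*}
\ker (\lambda - \calM_T)^k = L^p\bigl(\Omega; \ker(\lambda - T)^k\bigr).
\end{align*}
A linearly independent family $x_1, \dots, x_m$ in $\ker(\lambda - T)^k$, tensored with a fixed $0 \neq f \in L^p(\Omega)$, gives a linearly independent family in the right-hand side. Taking $k = 1$ yields the geometric multiplicity bound, and taking $k$ large enough to stabilise the generalised eigenspace $\bigcup_k \ker(\lambda - T)^k$ yields the algebraic multiplicity bound.

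For (iii) I would take the spectral projection route. Isolatedness of $\lambda$ in $\sigma(T)$ and in $\sigma(\calM_T)$ coincide since the spectra agree, and by Proposition~\ref{proposition:spectral-projection-multiplication-operator}(ii) the associated spectral projections are $P$ and $\calM_P$. Recall that $\lambda$ is a pole of order $k$ of $R(\cdot, T)$ iff $\lambda$ is isolated and $k$ is minimal with $(\lambda - T)^k P = 0$. Because $\calM$ is an injective algebra homomorphism,
\begin{align*}
(\lambda - \calM_T)^j \calM_P = \calM_{(\lambda - T)^j P} = 0 \quad \iff \quad (\lambda - T)^j P = 0,
\end{align*}
so the minimal such $j$ agree. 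The main obstacle, in my view, is just cleanly establishing $\ker \calM_S = L^p(\Omega; \ker S)$ (requiring a brief measurability argument in the closed subspace $\ker S$, particularly for $p = \infty$); once this identity and the isometry of $\calM$ are in hand, all three statements fall out uniformly.
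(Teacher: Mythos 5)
Your proposal is correct and follows essentially the same route as the paper: tensors $f \otimes x$ with a fixed nonzero $f \in L^p(\Omega)$ for the inclusions in (i) and the multiplicity bounds in (ii), the pointwise a.e.\ argument for the reverse inclusion in (i), and for (iii) the identification of the spectral projections via Proposition~\ref{proposition:spectral-projection-multiplication-operator}(ii) together with injectivity of the homomorphism $\calM$. The only cosmetic difference is that the paper phrases (iii) in terms of the negative Laurent coefficients $V_{-n} = \calM_{U_{-n}}$ (with $U_{-n} = (T-\lambda)^{n-1}P$), which is exactly your nilpotency criterion $(\lambda - \calM_T)^j \calM_P = \calM_{(\lambda-T)^j P}$ in different notation.
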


\begin{proof}
(i) Let $\lambda \in \sigma_p(T)$ and $x \in E$ an associated eigenvalue. Choose any non-zero $f \in L^p(\Omega)$. Then it is easy to see that $(\lambda - \calM_T)(f \otimes x) = 0$. Hence, $\sigma_p(T) \subseteq \sigma_p(\calM_T)$. 

On the other hand, suppose that $\lambda \notin \sigma_p(T)$ and that there is $f \in L^p(\Omega; E)$ such that $(\lambda - \calM_T) f = 0$. Then $(\lambda - T)f(\omega) = 0$ and therefore $f(\omega) = 0$ for a.e.\ $\omega \in \Omega$. Hence, $f = 0$, i.e. $\lambda \notin \sigma_p(\calM_T)$. 

(ii) Let $x \in E$ such that $(\lambda - T)^n x = 0$ for some $n \in \N$. Choose any non-zero $f \in L^p(\Omega)$. Then 
\begin{align*}
(\lambda - \calM_T)^n (f \otimes x) = \calM_{(\lambda - T)^n} (f \otimes x) = f \otimes (\lambda - T)^n x = 0,
\end{align*}
which shows the claim.

(iii) Suppose that $\lambda$ is an isolated point of the spectrum of $T$, or equivalently, of $\calM_T$. Consider the Laurent expansion the respective resolvents about $\lambda$ given by
\begin{align*}
R(\mu, T) = \sum_{n = -\infty}^\infty U_n (\mu - \lambda)^n, \qquad R(\mu, \calM_T) = \sum_{n = -\infty}^\infty V_n (\mu - \lambda)^n
\end{align*}
for coefficients $U_n \in \calL(E)$ and $V_n \in \calL(L^p(\Omega; E))$. We show that $V_n = \calM_{U_n}$ for all $n \leq -1$. The case $n = -1$ follows directly from Proposition~\ref{proposition:spectral-projection-multiplication-operator}(ii) as the residua in $\lambda$ coincide with the respective spectral projections. For the general case, observe
\begin{align*}
V_{-n} = (\calM_T - \lambda)^{n - 1} V_{-1} = \calM_{(T - \lambda)^{n - 1}} \calM_{U_{-1}} = \calM_{(T - \lambda)^{n - 1} U_{-1}} = \calM_{U_{-n}}
\end{align*}
by \cite[Formula IV.1.13]{Engel2000}. As $\calM$ is injective, one has $V_{-n} = 0$ if and only if $U_{-n} = 0$. Hence, the claim follows.
\end{proof}

\begin{remark} \label{remark:multiplication-operators-on-measures}
Let $T$ be a bounded operator on a Banach space $E$ and $(\Omega, \Sigma)$ a measurable space. Then it is not hard to show that the operator-valued multiplication operator
\begin{align*}
\calM_T : M(\Omega; E) \to M(\Omega; E), \quad (\calM_T \mu)(A) := T \mu(A)
\end{align*}
is a well-defined bounded operator. However, in contrast to the $L^p$-case, the map
\begin{align*}
\calM : \calL(E) \to \calL(M(\Omega; E)), \quad T \mapsto \calM_T
\end{align*} 
is just an injective algebra homomorphism and not necessarily isometric. On the other hand, this operator shares all other properties of the operator-valued multiplication operator mentioned in the beginning of this section. For this reason all proofs of the propositions that followed can be carried out for multiplication operators on $M(\Omega; E)$; this yields that all of these propositions hold for multiplication operators on $M(\Omega; E)$ as well.
\end{remark}

\section{An abstract result on semigroup asymptotics} \label{section:abstract-asymptotics-result}

The asymptotic behaviour of $C_0$-semigroups is a topic widely covered in the existing literature and much less is known when the semigroups are not strongly continuous. Recently, however, recently several authors investigated the asymptotic behaviour of semigroups with weaker regularity properties or even no at all (cf.\ \cite{GerlachLB}, \cite{Gerlach2017}, \cite{Glueck2019}, \cite{Dobrick2020}). In \cite{Glueck2019} Glück and Haase introduced the concept of the semigroup at infinity to investigate the asymptotic behaviour of semigroup representations with respect to the strong operator topology. Following this idea, Glück and the author of the present paper adapted this concept in \cite{Dobrick2020} to study convergence of semigroup representations with respect to the operator topology. In the present paper we use the semigroup at infinity to prove an abstract convergence result, which will be then applied to network flows in Section~\ref{section:applications-to-transport-equations-in-infinite-networks}. \smallskip

Let $T$ be a bounded operator on a Banach space $E$. Then $\calT:= \{T^n : n \in \bbN_0 \}$ is a semigroup of operators, and we call the set
\begin{align*}
\calT_\infty := \bigcap_{m \in \N} \overline{\set{T^n : n \geq m}}
\end{align*}
the \textit{semigroup at infinity} associated with the semigroup $\calT$. If $\calT_\infty$ is non-empty and compact, then one can apply the Jacobs-de Leeuw-Glicksberg decomposition (see e.g.\ \cite[Chapter~16]{Eisner2015}) to the semigroup $\calT_\infty$ to obtain the \textit{projection at infinity} $P_\infty \in \calT_\infty$ (see the discussion before \cite[Theorem~4.3]{Dobrick2020}). Using the semigroup at infinity we can easily prove the following lemma.

\begin{lemma} \label{lemma:positive-spectral-projection}
Let $T$ be a power-bounded, positive operator on a Banach lattice $E$ such that all spectral values of $T$ on the unit circle are poles of the resolvent. Then the spectral projection of $T$ associated with $\sigma(T) \cap \bbT$ is positive.
\end{lemma}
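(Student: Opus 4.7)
The plan is to exhibit the spectral projection $P$ associated with $\spec(T) \cap \bbT$ as an operator norm limit of a subsequence of powers $T^{n_k}$; since each $T^{n_k}$ is positive and the cone of positive operators is norm-closed, this will force $P \ge 0$.

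First I would argue that $\spec(T) \cap \bbT$ is in fact finite. Every spectral value on $\bbT$ is a pole of the resolvent, hence an isolated point of $\spec(T)$; so $\spec(T) \cap \bbT$ is a compact subset of $\bbT$ consisting entirely of isolated points, and any such set must be finite. Write $\spec(T) \cap \bbT = \{\lambda_1, \dots, \lambda_m\}$. Next I would invoke the classical observation that each $\lambda_j$ is a simple pole: a pole of order $k \ge 2$ would, via the Laurent expansion of $R(\mu, T)$ around $\lambda_j$, provide a nilpotent summand $N = (T - \lambda_j) P_{\lambda_j}$ with $N^{k-1} \neq 0$, and the corresponding Jordan-type expansion of $T^n$ would grow like $n^{k-1}$, contradicting power-boundedness.

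Given simplicity of the poles, the decomposition $E = PE \oplus \ker P$ has the following structure. On $\ker P$ the spectrum equals $\spec(T) \setminus \{\lambda_1, \dots, \lambda_m\} \subset \bbD$, so $r(T\rvert_{\ker P}) < 1$ and $\lVert T^n (I - P) \rVert \to 0$. On $PE$ one has $TP = \sum_{j = 1}^m \lambda_j P_{\lambda_j}$, where $P_{\lambda_j}$ denotes the spectral projection onto $\ker(T - \lambda_j)$, and consequently $T^n P = \sum_{j = 1}^m \lambda_j^n P_{\lambda_j}$ for every $n \in \bbN_0$. By compactness of the finite torus $\bbT^m$ applied to the orbit $(\lambda_1^n, \dots, \lambda_m^n)_{n \in \bbN_0}$ (or by Kronecker's simultaneous approximation), there exists a sequence $n_k \to \infty$ with $\lambda_j^{n_k} \to 1$ for every $j \in \{1, \dots, m\}$. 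Hence
\begin{align*}
T^{n_k} \;=\; \sum_{j = 1}^m \lambda_j^{n_k} P_{\lambda_j} \;+\; T^{n_k}(I - P) \;\longrightarrow\; \sum_{j = 1}^m P_{\lambda_j} \;=\; P
\end{align*}
in operator norm. Since each $T^{n_k}$ is positive, the limit $P$ is positive as well. In the language of Section~\ref{section:abstract-asymptotics-result} this identifies $P$ with the projection at infinity $P_\infty \in \calT_\infty$, making positivity automatic.

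The only step deserving attention is the simultaneous approximation of $\lambda_1^n, \dots, \lambda_m^n$ by $1$, but the finiteness of the peripheral spectrum reduces this to a trivial compactness argument on $\bbT^m$. Everything else is spectral bookkeeping together with the standard fact that peripheral poles of power-bounded operators are simple, so I do not anticipate a genuine obstacle in writing the proof out.
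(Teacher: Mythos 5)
Your proof is correct, and its key idea coincides with the one underlying the paper's proof: the spectral projection $P$ associated with $\sigma(T)\cap\bbT$ lies in the operator-norm closure of the positive powers $\{T^n\}$, and the cone of positive operators is norm-closed. The difference is one of execution. The paper delegates the hard part to \cite[Proposition~4.12]{Dobrick2020}: that result guarantees that the semigroup at infinity $\calT_\infty$ is non-empty and compact and that the projection at infinity coincides with $P$, after which positivity is immediate. You instead make the same fact self-contained by explicit spectral bookkeeping: finiteness of the peripheral spectrum (isolated points of a compact subset of $\bbT$), simplicity of the peripheral poles via power-boundedness, the resulting formula $T^nP=\sum_j\lambda_j^nP_{\lambda_j}$ together with $r(T\rvert_{\ker P})<1$, and a compactness (or Kronecker) argument on $\bbT^m$ producing $n_k\to\infty$ with $T^{n_k}\to P$ in norm. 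All of these steps are sound (the only point requiring a little care is arranging $\lambda_j^{n_k}\to 1$ simultaneously, which the subsequence-difference trick on $\bbT^m$ handles). What your route buys is independence from the semigroup-at-infinity machinery of \cite{Dobrick2020}; what the paper's route buys is brevity and consistency with the framework it already uses in Theorem~\ref{theorem:asymptotic-periodicity-mult-op-semigroup-lp} — indeed, as you note yourself, your limit identifies $P$ with the projection at infinity $P_\infty$.
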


\begin{proof}
Let $P$ be the spectral projection of $T$ associated with $\sigma(T) \cap \bbT$. By \cite[Proposition~4.12]{Dobrick2020}, the semigroup at infinity $\calT_\infty$ associated to the semigroup $\calT:= \{T^n : n \in \bbN_0 \}$ is non-empty and compact. Since $\calT$ consists only of positive operators and since the projection at infinity $P_\infty$ is contained in the closure of $\calT$, it follows that $P_\infty$ is positive. Now the claim follows as $P_\infty = P$ by \cite[Proposition~4.12]{Dobrick2020}.
\end{proof}

The following lemma is basically the spectral theoretic core of the arguments in the proof of \cite[Proposition~4.3]{Dorn2008}. However, assertion~(iii) below is missing in \cite{Dorn2008}. In the following section, we will be mainly interested in the case that $T$ is an infinite column-stochastic matrix with additional properties.

\begin{lemma} \label{lemma:value-space-splitting}
Let $T$ be a power-bounded, positive and irreducible operator on a Banach lattice $E$ such that $r(T) = 1$ is a pole of the resolvent of $T$. Then the following assertions hold:
\begin{enumerate}[\upshape (i)]
\item The peripheral spectrum of $T$ consists of the $k$th roots of unity for some $k \in \N$. Moreover, these roots of unity are all first order poles of the resolvent. 
\item The spectral projection $P$ of $T$ associated to the peripheral spectrum yields a decomposition $E = PE \oplus \ker P$ such that $T^k\rvert_{PE} = \id_{PE}$ and $r(T\rvert_{\ker P}) < 1$, where $k \in \N$ is the index of imprimitivity of $T$.
\item $P$ is strictly positive. In particular, $PE$ is a sublattice of $E$.
\end{enumerate}
\end{lemma}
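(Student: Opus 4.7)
Parts (i) and most of (ii) reduce to classical Perron--Frobenius theory for positive irreducible operators whose spectral radius is a pole of the resolvent, so my first move would be to assemble the standard references (e.g.\ \cite{Schaefer1974} or \cite{Meyer-Nieberg1991}) in the form needed here. Under the present hypotheses, the cyclicity theorem for the peripheral spectrum of a positive irreducible operator with $r(T)$ a pole of $R(\argument,T)$ says that $\sigma(T)\cap\bbT$ is invariant under multiplication by each of its elements, hence coincides with the group of $k$th roots of unity for some $k\in\bbN$, and furthermore that each $\lambda \in \sigma(T)\cap\bbT$ is a first-order pole of the resolvent. This gives (i).

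For (ii), first-order poles are semisimple, so the range $PE$ of the spectral projection associated to $\sigma(T)\cap\bbT$ decomposes as $PE = \bigoplus_{\lambda\in\sigma(T)\cap\bbT}\ker(\lambda-T)$, and on each summand $T$ acts as $\lambda\cdot\id$. Since $\lambda^k=1$ for every peripheral $\lambda$, this yields $T^k\rvert_{PE}=\id_{PE}$. For the complement, the peripheral spectrum being finite is isolated in $\sigma(T)$, so the spectral decomposition gives $\sigma(T\rvert_{\ker P}) = \sigma(T)\setminus(\sigma(T)\cap\bbT)$; this set is compact and contained in the open unit disk, hence $r(T\rvert_{\ker P})<1$.

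For (iii), I would first invoke Lemma~\ref{lemma:positive-spectral-projection} to conclude $P\geq 0$. The same Perron--Frobenius package used in (i) also supplies, by irreducibility, a strictly positive eigenfunctional $0<u'\in E'$ with $T'u'=u'$; since $1\in\sigma(T)\cap\bbT$, this $u'$ lies in the range of the adjoint projection $P'$. If some $0<f\in E$ satisfied $Pf=0$, then $\langle f,u'\rangle = \langle f,P'u'\rangle = \langle Pf,u'\rangle = 0$, contradicting strict positivity of $u'$; hence $P$ is strictly positive. For the sublattice statement, I would observe that for every $f\in PE$ one has $|f|=|Pf|\leq P|f|$ by positivity of $P$, so the element $P|f|-|f|\geq 0$ lies in $\ker P$ (as $P$ is a projection). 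Strict positivity of $P$ then forces $P|f|-|f|=0$, i.e.\ $|f|\in PE$, proving that $PE$ is a sublattice of $E$.

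The main obstacle is essentially bookkeeping: marshalling the correct ``pole of the resolvent'' version of the Perron--Frobenius theorem to yield both the cyclic structure of the peripheral spectrum and the strictly positive dual fixed point. Once these are in hand, the semisimplicity argument for (ii) and the dual-pairing argument plus modulus inequality for (iii) are short and essentially computational.
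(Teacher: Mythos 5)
Your argument is correct, and parts (i) and (ii) follow essentially the same route as the paper: both rest on the classical cyclicity theorem for positive irreducible operators whose spectral radius is a pole of the resolvent (the paper cites \cite[Theorem~V.5.4]{Schaefer1974}), followed by the standard spectral decomposition for a finite set of first-order poles. The genuine difference is in (iii). Like you, the paper first gets $P \geq 0$ from Lemma~\ref{lemma:positive-spectral-projection}, but it then proves strict positivity by showing that the absolute kernel $\calR = \set{x \in E : P\abs{x} = 0}$ is a closed $T$-invariant ideal (via $0 \leq P\abs{Tx} \leq PT\abs{x} = TP\abs{x} = 0$), hence trivial by irreducibility; the sublattice statement is then quoted from \cite[Proposition~III.11.5]{Schaefer1974}. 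You instead pair against the strictly positive fixed functional $u'$ of $T'$ furnished by the Perron--Frobenius theorem at the eigenvalue $1$; this is equally valid (the key point, which you correctly note, is that $P'u' = u'$ because the peripheral spectral projections are mutually orthogonal), and it avoids the ideal-theoretic detour at the cost of invoking the existence of the dual eigenfunctional. Your direct verification that $PE$ is a sublattice, from $\abs{f} = \abs{Pf} \leq P\abs{f}$ and $P\abs{f} - \abs{f} \in \ker P$, is exactly the content of the cited result. One small caveat: in (ii) you justify the isolation of the peripheral spectrum by its finiteness alone, which is not sufficient in general; what you actually need, and already have from (i), is that each peripheral spectral value is a pole and hence an isolated point of $\sigma(T)$.
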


\begin{proof}
Since $T$ is positive and irreducible and $r(T)$ is a pole of the resolvent of $T$, it follows from \cite[Theorem~V.5.4]{Schaefer1974} that the peripheral spectrum of $T$ is cyclic and consists of first order poles of the resolvent, only. In particular, the peripheral spectrum of $T$ is a finite group consisting of the $k$th roots of unity for some $k \in \N$. This shows (i).

In the following we denote the primitive $k$th root of unity by $\zeta$. By spectral decomposition it is clear that $T$ acts on $PE$ as the multiplication with the tuple $(1, \zeta, \dots, \zeta^{k - 1})$ and $r(T\rvert_{\ker P}) < 1$. Hence, $T^k$ acts as the identity on $PE$ which shows (ii).

To show (iii), observe first that $P$ is positive by Lemma~\ref{lemma:positive-spectral-projection}. To see that $P$ is even strictly positive, we consider the absolute kernel 
\begin{align*}
\calR := \set{x \in E: P \abs x = 0}.
\end{align*}
Clearly, $\calR$ is an ideal in $E$ and that is enough to show that $\calR = \set 0$. For each $x \in \calR$ one has
\begin{align*}
0 \leq P \abs{T x} \leq P T \abs x = T P \abs x = 0
\end{align*}
and therefore $P \abs{T x} = 0$. Hence, $\calR$ is a $T$-invariant ideal and thus trivial by the irreducibility of $T$. Since $P \neq 0$, one has $\calR = \set 0$, i.e., $P$ is strictly positive. Finally \cite[Proposition~III.11.5]{Schaefer1974} shows that $PE$ is a sublattice of $E$.
\end{proof}

\begin{definition} \label{def:asymptotically-periodic}
We call a bounded semigroup $(T(t))_{t \geq 0}$ on a Banach space $E$ \emph{asymptotically exponentially periodic} on $E$ if there exists a projection $P$ that commutes with the semigroup $(T(t))_{t \geq 0}$ such that
\begin{enumerate}[\upshape (i)]
\item $(T(t)\rvert_{PE})_{t \geq 0}$ can be extended to a periodic group on $PE$. 
\item $T(t)\rvert_{\ker P}$ is uniformly exponentially stable as $t \to \infty$.
\end{enumerate}
\end{definition} 

The following theorem is a generalized version of \cite[Proposition~4.3]{Dorn2008} by Dorn. In contrast to Dorn's result, it does not require any regularity assumptions on the semigroup. Hence, it is applicable to semigroups with no or weaker regularity properties, e.g., to bi-continuous semigroups.

\begin{theorem} \label{theorem:asymptotic-periodicity-mult-op-semigroup-lp}
Let $\Omega = (\Omega, \Sigma, \mu)$ be a measure space, $E$ a Banach lattice and $(T(t))_{t \geq 0}$ a bounded, positive semigroup on $X := L^p(\Omega; E)$, $1 \leq p \leq \infty$. Suppose that there is $t_0 \in [0, \infty)$ such that $T(t_0) = \calM_B$ for some irreducible operator $B$ on $E$ with $r(B) = 1$. If $r(B)$ is a pole of the resolvent of $B$, then there is a strictly positive projection $P$ commuting with $(T(t))_{t \geq 0}$ and with the following properties:
\begin{enumerate}[\upshape (i)]
\item $(T(t)\rvert_{PX})_{t \geq 0}$ can be extended to a positive, periodic group on the Banach lattice $PX$.
\item $(T(t)\rvert_{\ker P})_{t \geq 0}$ is uniformly exponentially stable, i.e., there exist $M \geq 1$ and $\omega > 0$ such that $\norm{T(t) - T(t) P}  \leq M \ue^{-\omega t}$. 
\end{enumerate}
\end{theorem}

\begin{proof}
As $(T(t))_{t \geq 0}$ is a bounded, positive semigroup, $B$ is power-bounded and positive. Hence, $B$ satisfies the assumptions of Lemma~\ref{lemma:value-space-splitting} which yields the decomposition $E = PE \oplus \ker P$, where $P$ denotes the spectral projection of $B$ associated to its peripheral spectrum. Moreover, $B^k$ acts as the identity on $PE$, where $k \in \N$ is the index of imprimitivity of $B$, and $r(B\rvert_{\ker P}) < 1$. Proposition~\ref{proposition:spectral-projection-multiplication-operator} yields the decomposition
\begin{align*}
X = \calM_P X \oplus \ker \calM_P = L^p(\Omega; PE) \oplus L^p(\Omega; \ker P) =: X_1 \oplus X_2,
\end{align*}
which coincides with the spectral decomposition corresponding to $\calM_B$. 

Now observe that $T(t_0)^k = \calM_{B^k}$ and therefore
\begin{align*}
T(t_0)^k\rvert_{X_1} = \calM_{B^k}\rvert_{X_1} = \calM_{B^k\rvert_{P E}} = \calM_{\id_{P E}} = \id_{X_1}
\end{align*}
by Lemma~\ref{lemma:mult-op-invariant-subspaces}(i). Furthermore,
\begin{align*}
r(T(t_0)\rvert_{X_2}) = r(\calM_{B\rvert_{\ker P}}) = r(B\rvert_{\ker P}) < 1.
\end{align*}
So the semigroup $(T(t_0)^n\rvert_{X_1})_{n \in \N_0}$ is periodic. Furthermore, there exists $q \in (\mathrm r(T(t_0)), 1)$ such that $\norm{T(t_0)^n\rvert_{X_2}} < q^n$ for $n$ sufficiently large. As $\calM_P$ coincides with the spectral projection of $\calM_B = T(t_0)$ associated with the peripheral spectrum, it commutes with the operators of the semigroup $(T(t))_{t \geq 0}$ and hence the spaces $X_1$ and $X_2$ are both invariant under the action of $(T(t))_{t \geq 0}$. So $(T(t))_{t \geq 0}$ restricts to semigroups on $X_1$ and $X_2$, respectively. Moreover, $P$ is strictly positive on $E$ by Lemma~\ref{lemma:value-space-splitting}(iii) and hence so is $\calM_P$. In particular, $X_1$ is a Banach lattice by \cite[Proposition~III.11.5]{Schaefer1974} and the semigroup $(T(t)\rvert_{X_1})_{t \geq 0}$ is clearly positive in this situation.

Finally, $(T(t)\rvert_{X_1})_{t \geq 0}$ extends to a group on $X_1$. $\norm{T(t_0)^n\rvert_{X_2}} < q^n$ for $n$ big enough and $(T(t)\rvert_{X_2})_{t \geq 0}$ is bounded, it follows that $\norm{T(t)\rvert_{X_2}} \leq M \ue^{-\omega t}$ for suitable constants $M \geq 1$ and $\omega > 0$. Therefore, $(T(t))_{t \geq 0}$ is asymptotically exponentially periodic in the sense of Definition~\ref{def:asymptotically-periodic}.
\end{proof}

As the proof of the theorem above only used results from Section~\ref{section:preliminaries} and some spectral theory on the Banach space in which the $L^p$-functions took their values in, one can obtain the Theorem~\ref{theorem:asymptotic-periodicity-mult-op-semigroup-lp} also on spaces of measures due to the observations made in Remark~\ref{remark:multiplication-operators-on-measures}. For later reference we state here this version of the theorem explicitly. 

\begin{theorem} \label{theorem:asymptotic-periodicity-mult-op-semigroup-measures}
Let $\Omega = (\Omega, \Sigma)$ be a measurable space, $E$ a Banach lattice and $(T(t))_{t \geq 0}$ a bounded, positive semigroup on $X := M(\Omega; E)$, $1 \leq p \leq \infty$. Suppose that there is $t_0 \in [0, \infty)$ such that $T(t_0) = \calM_B$ for some irreducible operator $B$ on $E$ with $r(B) = 1$. If $r(B)$ is a pole of the resolvent of $B$, then there is a strictly positive projection $P$ commuting with the semigroup $(T(t))_{t \geq 0}$ with the following properties:
\begin{enumerate}[\upshape (i)]
\item $(T(t)\rvert_{PX})_{t \geq 0}$ can be extended to a positive, periodic group on the Banach lattice $PX$.
\item $(T(t)\rvert_{\ker P})_{t \geq 0}$ is uniformly exponentially stable, i.e., there exist $M \geq 1$ and $\omega > 0$ such that $\norm{T(t) - T(t) P}  \leq M \ue^{-\omega t}$. 
\end{enumerate}
\end{theorem}

\section{Applications to transport equations in infinite networks}
\label{section:applications-to-transport-equations-in-infinite-networks}

\subsection{Transport equation on infinite networks}

In this section we consider transport processes on infinite networks. The network will be modeled by an infinite, directed graph $G = (V, E)$ which is assumed to be
\begin{enumerate}
\item[(a)] \textit{simple}, i.e., the graph contains no loops and no multiple edges,
\item[(b)] \textit{locally finite}, i.e., each vertex only has finitely many incident edges,
\item[(c)] \textit{non-degenerate}, i.e., each vertex of the network has at least one incoming as well as at least one outgoing edge.
\end{enumerate}
Moreover, $G$ will be considered as a \textit{metric graph} by identifying each edge with the unit interval $[0, 1]$ and parametrizing it contrarily to its direction. This means, under this identification, that each edge is assumed to have its endpoint at $0$ and its starting point at $1$. 

The distribution of mass transported along one edge $e_j$, $j \in J \subseteq \N$, at some time $t \geq 0$ will be described by a function $u_j(t, x)$ for $x \in [0, 1]$. The material in the network will be transported along $e_j$ with constant velocity $c_j > 0$ and we assume that
\begin{align*}
0 < c_{\min} \leq c_j \leq c_{\max} < \infty.
\end{align*}
Furthermore, we define 
\begin{align*}
\bbB^C := C^{-1} \bbB C,
\end{align*}
where $C := \diag(c_j)$ denotes the diagonal velocity matrix. In each vertex the material is distributed to the outgoing edges as governed by the weights of the respective edges. This will be modeled by assuming that the functions $u_j$ satisfy the generalized Kirchhoff law 
\begin{align*}
\sum_{j \in J} \phi^-_{ij} c_j u_j(1, t) = \sum_{j \in J} \phi^+_{ij} c_j u_j(0, t)
\end{align*}
for all $i \in I$ and $t > 0$. Altogether this transport process is modelled by the partial differential equation
\begin{align} \label{eq:transport-equation-network}
\left\lbrace 
\begin{aligned}
\frac{\partial}{\partial t} u_j(t, x) &= c_j \frac{\partial}{\partial x} u_j(t, x), &&\qquad x \in (0, 1), \, t \geq 0, \\
u_j(0, x) &= f_j(x), &&\qquad x \in (0, 1),\\
u_j(1, t) &= \sum_{k \in J} \bbB^C_{jk} u_k(0, t), &&\qquad t \geq 0,\\
\end{aligned}
\right.
\end{align}
$j \in J$, where $f_j$ are the initial distributions of mass along the edges. Solutions to this equation were investigated by using semigroup theory mainly in two different settings:

(i) It is shown in \cite{Dorn2008} that the solutions of \eqref{eq:transport-equation-network} form a strongly continuous semigroup on the space $L^1([0, 1]; \ell^1)$ in the case that $c_j = 1$ for all $j \in J$.

(ii) In \cite{Budde2019} it is shown that the solutions of \eqref{eq:transport-equation-network} form a so-called bi-continuous semigroup on the space $L^\infty([0, 1]; \ell^1)$ under some conditions on the velocities $c_j$. 

Although the same equation is investigated (however in different spaces),
in \cite{Dorn2008} the asymptotic behaviour of the solutions is discussed, whereas no such results can be found in \cite{Budde2019} for the solutions in the bi-continuous case. By making use of the theory developed in the prior section we close this gap. 

But first, we revisit Dorn's asymptotics result \cite[Proposition~4.3]{Dorn2008}. Our argument shows that the asymptotical periodicity of the solution obtained by Dorn does not depend on the strong continuity of the semigroup. On the other hand, the exponential speed of convergence in the theorem does rely on the strong continuity. In addition, it is proven that the involved projection is strictly positive.

\subsection{Strongly continuous flows on $L^1([0,1]; \ell^1)$} \label{subsection:flows-on-l1}

Let us first assume that 
\begin{align*}
c_j = 1 \qquad \text{for all } j \in J
\end{align*}
and equip the space $L^1([0,1]; \ell^1)$ with the usual norm given by 
\begin{align*}
\norm{f} := \int_0^1 \norm{f(s)}_{\ell^1} \, \ud s \qquad (f \in L^1([0,1]; \ell^1)). 
\end{align*}
Consider the differential operator
\begin{align*}
A := \diag\bigg(\frac{\ud}{\ud x} \bigg), \quad \dom A := \set{f \in W^{1, 1}([0, 1]; \ell^1) : f(1) = \bbB f(0)}.
\end{align*}
Notice that $W^{1, 1}([0, 1]; \ell^1)$ embeds continuously into $C([0, 1]; \ell^1)$ so that the above operator is well-defined. By \cite[Proposition~3.1]{Dorn2008}, we can translate \eqref{eq:transport-equation-network} into the abstract Cauchy problem 
\begin{align}
\left\lbrace 
\begin{aligned}
u'(t) &= A u(t), &&\quad t \geq 0, \\
u(0) &= (f_j)_{j \in J} &&
\end{aligned}
\right.
\end{align}
on the space $L^1([0,1]; \ell^1)$. Now \cite[Proposition~3.3]{Dorn2008} shows that $A$ generates a strongly continuous semigroup $(T(t))_{t \geq 0}$ on $L^1([0,1]; \ell^1)$ given by
\begin{align*}
T(t) f(s) = \bbB^n f(s + t - n) \qquad (f \in L^1([0,1]; \ell^1)),
\end{align*}
if $n \in \N_0$ is such that $n \leq s + t < n + 1$. Moreover, it is easy to see that this semigroup is both contractive and positive. So $(T(t))_{t \geq 0}$ is basicly a shift semigroup on the graph with a ``jump" at the end of each edge governed by the weighted adjencency matrix $\bbB$.

\begin{proposition} \label{proposition:l1-asymptotics-simple-case}
Let $X:= L^1([0, 1]; \ell^1)$. Suppose that $\bbB$ is quasi-compact and irreducible. Then there is a strictly positive projection $P$ commuting with the semigroup $(T(t))_{t \geq 0}$ with the following properties:
\begin{enumerate}[\upshape (i)]
\item $(T(t)\rvert_{PX})_{t \geq 0}$ can be extended to a positive, periodic $C_0$-group on the Banach lattice $PX$.
\item $(T(t)\rvert_{\ker P})_{t \geq 0}$ is uniformly exponentially stable, i.e., $\norm{T(t) - T(t) P} \leq M \ue^{-\omega t}$ for some constants $M \geq 1$ and $\omega > 0$. 
\end{enumerate}
\end{proposition}
\begin{proof}
Clearly, $(T(t))_{t \geq 0}$ is a positive, contractive semigroup on $L^1([0,1]; \ell^1)$ with $T(1) = \calM_\bbB$. Moreover, $\bbB$ is irreducible by hypothesis with $r(\bbB) = 1$. Since $\bbB$ is assumed to be quasi-compact, the peripheral spectrum $\sigma(\bbB) \cap \bbT$ consists of poles of the resolvent. Hence, by Theorem~\ref{theorem:asymptotic-periodicity-mult-op-semigroup-lp} there is a strictly positive projection $P$ on $X$ such that $(T(t)\rvert_{PX})_{t \geq 0}$ can be extended to a positive, periodic $C_0$-group on the Banach lattice $PX$ and $(T(t)\rvert_{\ker P})_{t \geq 0}$ is uniformly exponentially stable, i.e., there are constants $M \geq 1$ and $\omega > 0$ such that $\norm{T(t) - T(t) P} \leq M \ue^{-\omega t}$.
\end{proof}

\begin{remark}
The strict positivity of the projection $P$ in the theorem above could alternatively be deduced from the irreducibility of the semigroup $(T(t))_{t \geq 0}$ (cf.\ \cite[Proposition~4.9]{Dorn2008}). 

However, on other state spaces where the solution semigroup is not strongly continuous (see Section~\ref{subsection:Bi-continuous-flows-on-L-infty}) there is no characterization of irreduciblity of the semigroup via the resolvent of the generator to the best of the authors knowledge.
\end{remark}

Now let us consider the case where not all velocities $c_j$ are equal to $1$. However, assume that the velocities $c_j$ are linearly dependant over $\Q$, i.e., $\frac{c_i}{c_j} \in \Q$ for all $i, j \in J$, with a common finite multiplier $c > 0$. This means that 
\begin{align*}
\ell_j := \frac{c}{c_j} \in \N \qquad \text{for all } j \in J.
\end{align*}
Consider the differential operator 
\begin{align*}
A_C :=  \diag\bigg(c_j \cdot \frac{\ud}{\ud x} \bigg), \quad \dom A_C := \set{f \in W^{1, 1}([0, 1]; \ell^1) : f(1) = \bbB^C f(0)}.
\end{align*}
The assumption on the velocities allows us to construct a new graph by adding $\ell_j - 1$ vertices on each edge $e_j$, $j \in J$ of the graph and let the so-created new edges inherit the orientation of the original edge. Carrying out this construction one can show  that the operator $A_C$ generates a strongly continuous semigroup $(T_C(t))_{t \geq 0}$ on a space isometrically isomorphic to $L^1([0, 1]; \ell^1)$, i.e., there exists a strictly positive isomorphism $\bbS \in \calL(L^1([0, 1]; \ell^1))$ such that
\begin{align*}
T_C(ct) f = \bbS^{-1} T(t) \bbS f \qquad (f \in L^1([0, 1]; \ell^1))
\end{align*}
for all $t \geq 0$. Here, $(T(t))_{t \geq 0}$ denotes the semigroup defined above. For more details on the construction we refer to \cite[Section~3]{Banasiak2014}; or see \cite{Namayanja2013} for even more detail.

Now, the following asymptotics result is an immediate consequence of Proposition~\ref{proposition:l1-asymptotics-simple-case}. 
\begin{corollary}
Let $X:= L^1([0, 1]; \ell^1)$. Suppose that $\bbB$ is quasi-compact and irreducible. Then there is a strictly positive projection $P_C$ commuting with the semigroup $(T_C(t))_{t \geq 0}$ with the following properties:
\begin{enumerate}[\upshape (i)]
\item $(T_C(t)\rvert_{P_CX})_{t \geq 0}$ can be extended to a positive, periodic $C_0$-group on the Banach lattice $P_C X$.
\item $(T(t)\rvert_{\ker P_C})_{t \geq 0}$ is uniformly exponentially stable. 
\end{enumerate}
\end{corollary}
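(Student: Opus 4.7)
The plan is to transfer Proposition~\ref{proposition:l1-asymptotics-simple-case} through the intertwining relation $T_C(ct) = \bbS^{-1} T(t) \bbS$. Let $P$ denote the strictly positive projection furnished by that proposition for the unit-velocity semigroup $(T(t))_{t \geq 0}$, and define $P_C := \bbS^{-1} P \bbS$. The identity $P_C^2 = P_C$ is immediate from $P^2 = P$, and the commutation $P_C T_C(t) = T_C(t) P_C$ follows directly from $P T(s) = T(s) P$ together with the intertwining. Strict positivity of $P_C$ reduces to the observation that both $\bbS$ and $\bbS^{-1}$ are strictly positive, combined with the strict positivity of $P$ from Proposition~\ref{proposition:l1-asymptotics-simple-case}.

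For part (i), I would use that $\bbS$ restricts to a Banach lattice isomorphism between $P_C X$ and $P X$. Proposition~\ref{proposition:l1-asymptotics-simple-case}(i) provides a positive periodic $C_0$-group $(U(s))_{s \in \R}$ on $P X$ that extends the restricted semigroup; pulling this back via $\bbS$ and rescaling time by $c$ yields the positive periodic $C_0$-group $U_C(t) := \bbS^{-1} U(t/c) \bbS$ on $P_C X$, which coincides with $(T_C(t)\rvert_{P_C X})_{t \geq 0}$ on $[0, \infty)$ by the intertwining relation. For part (ii), the uniform exponential stability $\norm{T(s) - T(s) P} \leq M \ue^{-\varepsilon s}$ from Proposition~\ref{proposition:l1-asymptotics-simple-case}(ii) transfers, via the same intertwining, to the estimate $\norm{T_C(t) - T_C(t) P_C} \leq \norm{\bbS^{-1}} \norm{\bbS} M \ue^{-\varepsilon t / c}$, giving uniform exponential stability on $\ker P_C$.

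The only delicate point is to justify that $\bbS$ is actually a lattice isomorphism rather than merely a positive Banach space isomorphism, so that $\bbS^{-1}$ is itself strictly positive and so that the pulled-back group $(U_C(t))_{t \in \R}$ retains positivity on the Banach lattice $P_C X$. This is transparent from the explicit construction of $\bbS$ in \cite[Section~3]{Banasiak2014} via edge subdivision and rescaling, and I would simply invoke that construction rather than reverify its lattice-theoretic properties in detail.
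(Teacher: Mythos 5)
Your proposal is correct and is exactly the approach the paper intends: the paper gives no separate proof, stating only that the corollary is an immediate consequence of Proposition~\ref{proposition:l1-asymptotics-simple-case} via the intertwining $T_C(ct) = \bbS^{-1} T(t) \bbS$, and your write-up (defining $P_C := \bbS^{-1} P \bbS$ and transporting (i) and (ii) with a time rescaling) simply supplies the omitted details. Your flagged concern about $\bbS$ being a lattice isomorphism rather than merely a positive isomorphism is legitimate and is correctly resolved by the explicit subdivision-and-rescaling construction of $\bbS$.
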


\begin{remark}
By Proposition~\ref{proposition:graph-properties-vs-bbB-properties} the above results are applicable to each strongly connected graph with an attractor. In particular, the flows on such graphs are asymptotically periodic.
\end{remark}

\subsection{Bi-continuous flows on $L^\infty([0,1]; \ell^1)$} \label{subsection:Bi-continuous-flows-on-L-infty} Now we turn our attention to the bi-continuous case considered in \cite{Budde2019}. The abstract theory developed in Section~\ref{section:abstract-asymptotics-result} allows to prove analogous results to those in the $L^1$-setting. Again, we assume first that 
\begin{align*}
c_j = 1 \qquad \text{for all } j \in J
\end{align*}
and consider the space $L^\infty([0,1]; \ell^1)$ equipped with the usual norm
\begin{align*}
\norm{f} := \operatorname{esssup}_{s \in [0, 1]} \norm{f(s)}_{\ell^1} \qquad (f \in L^\infty([0,1]; \ell^1)). 
\end{align*}
Moreover, we consider the operator 
\begin{align*}
A := \diag\bigg(\frac{\ud}{\ud x} \bigg), \quad \dom A := \set{f \in W^{1, \infty}([0, 1]; \ell^1) : f(1) = \bbB f(0)},
\end{align*}
notice that it is well-defined since $W^{1, \infty}([0, 1]; \ell^1)$ embeds continuously into $C([0, 1]; \ell^1)$ and that \eqref{eq:transport-equation-network} can be translated into the abstract Cauchy problem 
\begin{align}
\left\lbrace 
\begin{aligned}
u'(t) &= A u(t), &&\quad t \geq 0, \\
u(0) &= (f_j)_{j \in J}. &&
\end{aligned}
\right.,
\end{align}
but this time on the space $L^\infty([0,1]; \ell^1)$. Completely analogously to \cite[Theorem~3.2]{Dorn2008} one obtains the following resolvent formula on $L^\infty([0,1]; \ell^1)$ (see also \cite[Proposition~3.8]{Budde2019}).

\begin{theorem} \label{theorem:resolvent-formula-transport-semigroup}
For $\lambda > 0$ one has 
\begin{align*}
(R(\lambda, A) f)(s) = \sum_{n = 0}^\infty \ue^{-\lambda n} \int_0^1 \ue^{-\lambda(t + 1 - s)} \bbB^{n + 1} f(t) \, \ud t + \int_s^1 \ue^{\lambda (s - t)} f(t) \, \ud t
\end{align*}
for all $f \in L^\infty([0,1]; \ell^1)$ and $s \in [0, 1]$. Furthermore, for $\lambda \in \C$ one has the characteristic equations
\begin{align}
\lambda \in \sigma(A) \quad \, \Longrightarrow& \quad \ue^\lambda \in \sigma(\bbB) \label{eq:char-eq-spectrum}, \\
\lambda \in \sigma_p(A) \quad \Longleftrightarrow& \quad \ue^\lambda \in \sigma_p(\bbB) \label{eq:char-eq-point-spectrum}.
\end{align}
\end{theorem}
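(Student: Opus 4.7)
\medskip

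\noindent\textbf{Proof plan.} The strategy is to solve the resolvent equation $(\lambda - A)u = f$ directly by treating it coordinatewise as an inhomogeneous first-order ODE on $[0,1]$ with non-local coupling between coordinates provided by the boundary condition $u(1) = \bbB u(0)$. Since the embedding $W^{1,\infty}([0,1]; \ell^1) \hookrightarrow C([0,1]; \ell^1)$ makes the trace evaluations at $0$ and $1$ meaningful, this is well-posed. Throughout, the argument parallels \cite[Theorem~3.2]{Dorn2008}; only the $L^\infty$-integrability checks differ from the $L^1$-case, and these are easier because essentially bounded functions are integrable against exponentials on the unit interval.

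For fixed $f \in L^\infty([0,1]; \ell^1)$ and $\lambda > 0$, I first solve the equation $\lambda u - u' = f$ componentwise. Variation of constants (integrating from $s$ up to $1$) gives
\begin{equation*}
u(s) = \ue^{\lambda(s - 1)} u(1) + \int_s^1 \ue^{\lambda(s - t)} f(t) \, \ud t.
\end{equation*}
Evaluating at $s = 0$ and invoking the boundary condition $u(1) = \bbB u(0)$ yields $(I - \ue^{-\lambda} \bbB) u(0) = \int_0^1 \ue^{-\lambda t} f(t) \, \ud t$. Because $r(\bbB) = 1$ and $\lambda > 0$, the operator $I - \ue^{-\lambda} \bbB$ is invertible with Neumann series $\sum_{n \geq 0} \ue^{-\lambda n} \bbB^n$, whose convergence is absolute in $\calL(\ell^1)$. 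Substituting $u(1) = \bbB u(0)$ and $u(0)$ back into the formula for $u(s)$ and interchanging the summation with the (absolutely convergent) integral produces exactly the claimed expression for $R(\lambda, A)f$. It then remains to verify that this $u$ lies in $\dom A$; this is routine, since the integrands are uniformly bounded in $\ell^1$ and $u$ inherits $W^{1,\infty}$-regularity from $f$ and from the $\ue^{\lambda(s-t)}$-factor, while the boundary condition $u(1) = \bbB u(0)$ is built into the construction.

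For the implication in \eqref{eq:char-eq-spectrum}, I argue by contraposition: if $\ue^\lambda \notin \sigma(\bbB)$ for some arbitrary $\lambda \in \bbC$, then $(\ue^\lambda - \bbB)$, and hence also $(I - \ue^{-\lambda} \bbB)$, is invertible in $\calL(\ell^1)$ (regardless of the Neumann series). Defining $u(0) := (I - \ue^{-\lambda}\bbB)^{-1} \int_0^1 \ue^{-\lambda t} f(t)\,\ud t$, $u(1) := \bbB u(0)$, and $u(s) := \ue^{\lambda(s-1)} u(1) + \int_s^1 \ue^{\lambda(s-t)} f(t) \, \ud t$, I check that $u \in \dom A$ and $(\lambda - A)u = f$, so $\lambda \in \rho(A)$. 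Boundedness of $f \mapsto u$ in the $L^\infty$-norm is immediate from the boundedness of $(I - \ue^{-\lambda} \bbB)^{-1}$ and the uniform bound on the integral kernels.

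For the point-spectrum equivalence \eqref{eq:char-eq-point-spectrum}, the ODE $u' = \lambda u$ with $u \in \dom A$ forces $u(s) = \ue^{\lambda s} u(0)$ componentwise, and $u \neq 0$ iff $u(0) \neq 0$. The boundary condition then reads $\ue^\lambda u(0) = u(1) = \bbB u(0)$, so $u(0)$ is an eigenvector of $\bbB$ with eigenvalue $\ue^\lambda$; this establishes both directions simultaneously. The main obstacle I anticipate is not conceptual but bookkeeping: carefully justifying the interchange of sum and integral in the derivation of the resolvent formula and verifying membership in $\dom A$ (continuity of the trace maps, norm estimates in $L^\infty([0,1]; \ell^1)$), all of which reduce to standard dominated-convergence style arguments once the geometric decay $\ue^{-\lambda n}$ and the uniform bound $\|\bbB\| = 1$ are combined.
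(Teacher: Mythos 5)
Your proposal is correct and takes essentially the same route as the paper, which does not write out a proof but declares the statement ``completely analogous'' to \cite[Theorem~3.2]{Dorn2008} and \cite[Proposition~3.8]{Budde2019} --- i.e.\ precisely the variation-of-constants computation, the Neumann-series inversion of $I - \ue^{-\lambda}\bbB$, and the reduction of the eigenvalue equation to $\ue^{\lambda}u(0) = \bbB u(0)$ that you carry out.
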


As seen in the section before, the solution semigroup of \eqref{eq:transport-equation-network} on $L^1([0,1]; \ell^1)$ is basicly a shift with some jump. So would be a  reasonable guess that something similar if not the same stays true for $L^\infty([0,1]; \ell^1)$. However, it is well known that the shift semigroups are generally not strongly continuous on $L^\infty$-spaces. Hence, one can not expect that $A$ generates a strongly continuous semigroup. However, it is proven in \cite[Theorem~3.9]{Budde2019} that $A$ generates a semigroup with a weaker regularity property, a so called bi-continuous semigroup (see e.g.\ \cite{Kuehnemund2003} for a definition).

\begin{theorem}
The operator $A$ generates a contractive, bi-continuous semigroup $(T(t))_{t \geq 0}$ on $L^\infty([0,1]; \ell^1)$ given by
\begin{align*}
T(t) f(s) = \bbB^n f(s + t - n) \qquad (f \in L^\infty([0,1]; \ell^1)),
\end{align*}
if $n \in \N_0$ is such that $n \leq s + t < n + 1$, with respect to the weak$^\ast$-topology. 
\end{theorem}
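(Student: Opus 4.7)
The plan is to construct the semigroup explicitly from the shift-with-jump formula and then verify, in turn, that it is contractive, satisfies the semigroup law, is bi-continuous with respect to the weak$^\ast$-topology, and has $A$ as its generator.

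First, I would show that the formula $T(t)f(s) := \bbB^n f(s+t-n)$, with $n \in \N_0$ chosen so that $n \leq s+t < n+1$, is well-defined on $L^\infty([0,1]; \ell^1)$: its value depends only on almost-everywhere information on $f$ and on the fractional part $s+t-n$. Since $\bbB$ is column-stochastic on $\ell^1$ with $\norm{\bbB} = 1$, one has $\norm{T(t)f(s)}_{\ell^1} \leq \norm{f}_\infty$ almost everywhere, which proves contractivity. The semigroup law follows from a case distinction on the unit-interval position of $s+t+u$ combined with $\bbB^m \bbB^n = \bbB^{m+n}$.

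Second, I would verify bi-continuity. Local boundedness is immediate from contractivity, so there remain two conditions to check: weak$^\ast$-continuity of the orbits and bi-equicontinuity on weak$^\ast$-compact sets. For the orbit continuity I would first handle $f \in C([0,1]; \ell^1)$, where $t \mapsto T(t)f$ is actually norm-continuous on each interval avoiding the ``jump'' times $t \in \N$, and then extend to general $f \in L^\infty([0,1];\ell^1)$ by a density-plus-uniform-boundedness argument: pair with a simple element of the predual, approximate, and use contractivity. The bi-equicontinuity on weak$^\ast$-compact sets then follows from the explicit shift-like structure of $T(t)$ together with $\norm{\bbB} = 1$.

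Third, to identify the generator as $A$, I would compute the Laplace transform $\int_0^\infty \ue^{-\lambda t} T(t) f \, \ud t$ (in the appropriate weak$^\ast$-sense) directly from the explicit formula. Splitting the integration as $\sum_{n=0}^\infty \int_n^{n+1}$ and substituting $t \mapsto t - n$ on each piece reproduces exactly the closed-form expression of Theorem~\ref{theorem:resolvent-formula-transport-semigroup} for $R(\lambda, A)$. Since a bi-continuous semigroup is uniquely determined by its generator via this Laplace-transform relation, this identifies $A$ as the generator of $(T(t))_{t \geq 0}$. The main obstacle is the bi-continuity verification, particularly the bi-equicontinuity on weak$^\ast$-compact sets: the very same structural feature (the jump at integer times) that prevents $(T(t))_{t \geq 0}$ from being strongly continuous is what makes the weaker bi-continuous regularity delicate to establish. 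Once that topological step is controlled, everything else amounts to routine computations leveraging the resolvent formula already stated in Theorem~\ref{theorem:resolvent-formula-transport-semigroup}.
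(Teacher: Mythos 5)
First, note that the paper does not actually prove this statement: it is imported verbatim from \cite[Theorem~3.9]{Budde2019}, so your attempt is measured against the argument there rather than against anything in the present text. Your overall architecture (explicit formula, contractivity from $\norm{\bbB}=1$, semigroup law by case distinction, bi-continuity, identification of the generator by matching the Laplace transform of the orbit with the resolvent formula of Theorem~\ref{theorem:resolvent-formula-transport-semigroup}) is the natural one, and the well-definedness, contractivity, semigroup-law and Laplace-transform steps are routine and correct as you describe them.

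There are, however, two genuine problems in the bi-continuity step. First, your claim that for $f\in C([0,1];\ell^1)$ the orbit $t\mapsto T(t)f$ is norm-continuous away from $t\in\N$ is false: for $t\in(0,1)$ the function $s\mapsto T(t)f(s)$ switches from $f(s+t)$ to $\bbB f(s+t-1)$ at $s=1-t$, and this switching point moves with $t$. Taking $f\equiv x_0$ constant with $\bbB x_0\neq x_0$ gives $\norm{T(t)f-T(t')f}_\infty=\norm{x_0-\bbB x_0}_{\ell^1}$ for all $t\neq t'$ in $(0,1)$, so the orbit is nowhere norm-continuous unless $f(1)=\bbB f(0)$. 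This is repairable --- weak$^\ast$-continuity of \emph{all} orbits against the predual $L^1([0,1];c_0)$ follows directly by a change of variables and continuity of translation in $L^1$, the switching region having measure $\abs{t-t'}$ --- but the density argument you propose rests on a false intermediate claim. Second, and more seriously, the local bi-equicontinuity is exactly where the proof has content, and you only assert it. Dualizing the explicit formula, $\dual{g,T(t)f}$ pairs $f$ against a function built from the translates of $(\bbB^n)^Tg$, and $\bbB^T$ need not map $c_0$ into $c_0$: local finiteness only bounds out-degrees, so rows of $\bbB$ can fail to vanish at infinity. Hence $T(t)$ is not obviously an adjoint operator on the predual, and equicontinuity on norm-bounded weak$^\ast$-convergent sequences does not follow merely from $\norm{\bbB}=1$ and the ``shift-like structure''. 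This step requires the actual argument of \cite{Budde2019} and is missing from your proposal.
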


In \cite[Theorem~4.10]{Dorn2008} Dorn gives a characterization for asymptotic periodicity of the  strongly continuous flow semigroup on $L^1([0,1]; \ell^1)$. Using the abstract theory from Section~\ref{section:abstract-asymptotics-result} and ideas from the proof of \cite[Theorem~4.10]{Dorn2008}, we can prove that the same characterization holds for the bi-continuous semigroup $(T(t))_{t \geq 0}$.

\begin{theorem} \label{theorem:asymptotic-behaviour-flow-semigroup}
Let $G$ be a strongly connected graph. Then the following two assertions are equivalent:
\begin{enumerate}[\upshape (i)]
\item $G$ has an attractor.
\item $(T(t))_{t \geq 0}$ is asymptotically exponentially periodic in the sense of Definition~\ref{def:asymptotically-periodic}.
\end{enumerate} 
In this case, the period $\theta$ of $(T(t)\rvert_{X_1})_{t \geq 0}$ is given by the greatest common divisor of all cycle lengths which occur in the graph $G$.  
\end{theorem}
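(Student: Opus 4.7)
The plan is to prove the two implications separately, exploiting the identity $T(1)=\calM_\bbB$ throughout.

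For the easy direction $(i)\Rightarrow(ii)$ the argument reduces to Theorem~\ref{theorem:asymptotic-periodicity-mult-op-semigroup-lp}. By Proposition~\ref{proposition:graph-properties-vs-bbB-properties}, strong connectedness of $G$ gives that $\bbB$ is irreducible on $\ell^1$, and the existence of an attractor gives that $\bbB$ is quasi-compact; since $\bbB$ is column-stochastic, $r(\bbB)=1$, and quasi-compactness forces $r(\bbB)=1$ to be a pole of the resolvent of $\bbB$. Because $T(1)=\calM_\bbB$, applying Theorem~\ref{theorem:asymptotic-periodicity-mult-op-semigroup-lp} with $t_0=1$, $p=\infty$ and $B=\bbB$ produces the required strictly positive projection together with the asymptotic periodicity splitting.

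For the converse $(ii)\Rightarrow(i)$, my strategy is to transport the spectral information from $(T(t))_{t\geq 0}$ back to $\bbB$ via the isometric embedding $\calM$. From Definition~\ref{def:asymptotically-periodic} I read off that $\sigma(T(1)|_{\ker P})\subseteq\{|z|<1\}$ and $\sigma(T(1)|_{PX})\subseteq\bbT$, so $\sigma(T(1))\cap\bbT$ is isolated from the rest of the spectrum and $P$ is the associated spectral projection of $T(1)$. By Proposition~\ref{proposition:spectral-projection-multiplication-operator}(ii), $P=\calM_Q$, where $Q$ is the spectral projection of $\bbB$ on $\sigma(\bbB)\cap\bbT$, and since $\calM$ is isometric, $\norm{\bbB^n(I-Q)}=\norm{T(n)(I-P)}\to 0$. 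Consequently $r(\bbB|_{\ker Q})<1$, and combined with Proposition~\ref{prop:mult-op-spectral-properties}(iii) this forces $1$ to be a pole of the resolvent of $\bbB$.

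Next I would observe that the period $\theta$ of the continuous group $(T(t)|_{PX})_{t\geq 0}$ is necessarily a positive integer. Indeed, Proposition~\ref{proposition:spectral-projection-multiplication-operator}(i) gives $PX=L^\infty([0,1];QE)$, and a direct test-function comparison against the explicit formula $T(t)f(s)=\bbB^n f(s+t-n)$ (valid for $n\leq s+t<n+1$) shows that $T(\theta)|_{PX}=\id$ forces the fractional part of $\theta$ to vanish; hence $\theta=k\in\N$ and $\bbB^k|_{QE}=\id$. At this point $\bbB$ satisfies all hypotheses of Lemma~\ref{lemma:value-space-splitting}, so its peripheral spectrum consists of the $k$-th roots of unity as first-order poles. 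By the Perron--Frobenius theorem for irreducible positive operators, $1$ is geometrically simple and, by cyclicity, the same holds for each remaining peripheral eigenvalue; thus $\dim QE=k$. Hence $Q$ is finite rank, the decomposition $\bbB^n=Q\bbB^n+\bbB^n(I-Q)$ exhibits $\bbB$ as quasi-compact, and Proposition~\ref{proposition:graph-properties-vs-bbB-properties}(ii) provides the desired attractor. The period $k$ is, by construction, the index of imprimitivity of $\bbB$, which equals the greatest common divisor of all cycle lengths in $G$ by classical Perron--Frobenius theory for irreducible non-negative matrices.

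The main obstacle is the integer-period claim $\theta\in\N$: this is the one place where the abstract framework of Section~\ref{section:abstract-asymptotics-result} is insufficient and the concrete shift-with-jumps structure of the flow semigroup must be used explicitly. Without this structural input the converse implication could fail for irreducible stochastic operators $\bbB$ whose peripheral spectrum is all of $\bbT$ (as in a bilateral-shift type example on a bi-infinite line graph), and such operators in fact do not produce an asymptotically periodic flow semigroup in the first place.
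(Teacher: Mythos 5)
Your argument is essentially the paper's own: the forward implication is the same reduction to Theorem~\ref{theorem:asymptotic-periodicity-mult-op-semigroup-lp}, and the converse follows the same path --- identify $P$ with $\calM_Q$ for the peripheral spectral projection $Q$ of $\bbB$ via Proposition~\ref{proposition:spectral-projection-multiplication-operator}, use the explicit shift-with-jump formula to force the period into $\N$, deduce that $\bbB\rvert_{Q\ell^1}$ is algebraic so that its peripheral spectrum is a finite set of poles, and conclude quasi-compactness. Two soft spots are worth fixing. First, the claim that $r(\bbB\rvert_{\ker Q})<1$ together with Proposition~\ref{prop:mult-op-spectral-properties}(iii) ``forces $1$ to be a pole of the resolvent of $\bbB$'' is a non sequitur where it stands: at that stage the peripheral spectrum could in principle be all of $\bbT$, so $1$ need not even be an isolated spectral value. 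The pole property only becomes available after your integer-period step, which yields $\bbB^k\rvert_{Q\ell^1}=\id$ and hence a finite peripheral spectrum; you must establish that before invoking Lemma~\ref{lemma:value-space-splitting}. Second, the ``direct test-function comparison'' needs an explicit witness in $PX=L^\infty([0,1];Q\ell^1)$: the paper takes $\one_{[0,\varepsilon]}\otimes x$ with $x$ a strictly positive fixed vector of $\bbB$ supplied by \cite[Proposition~2.1]{Keicher2008a}, and for $\varepsilon<\theta-k$ the identity $T(\theta)(\one_{[0,\varepsilon]}\otimes x)(s)=\one_{[0,\varepsilon]}(\theta+s-k)\,\bbB^kx$ forces $x=0$ on a set of positive measure, a contradiction (in fact any nonzero $x\in Q\ell^1$ would do here, so Keicher's result is a convenience rather than a necessity). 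Finally, your identification of the period with the index of imprimitivity replaces the paper's appeal to the characteristic equation \eqref{eq:char-eq-point-spectrum} and the point-spectrum spectral mapping theorem; it is correct, since $\bbB^k\rvert_{Q\ell^1}=\id$ gives $m\mid k$ for the index of imprimitivity $m$, while $T(m)\rvert_{PX}=\calM_{\bbB^m\rvert_{Q\ell^1}}=\id$ gives $\theta=k\le m$, whence $k=m$ and the gcd description follows from classical Perron--Frobenius theory as in the paper.
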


\begin{proof}
(i) $\Rightarrow$ (ii): As $G$ is strongly connected and has an attractor, $\bbB$ is irreducible and quasi-compact. In particular, the peripheral spectrum of $\bbB$ consists of poles of the resolvent of $\bbB$. So it follows from Theorem~\ref{theorem:asymptotic-periodicity-mult-op-semigroup-lp} that $(T(t))_{t \geq 0}$ is asymptotically periodic since the flow semigroup is bounded and positive with $T(1) = \calM_\bbB$. Moreover, it follows from the proof of Theorem~\ref{theorem:asymptotic-periodicity-mult-op-semigroup-lp} that $\theta$ is smaller or equal to the index of imprimitivity of the matrix $k$, i.e., by the number of spectral values of $\bbB$ with modulus equal to $1$. But due to the characteristic equation~\eqref{eq:char-eq-point-spectrum} and the spectral mapping theorem for the point spectrum (see \cite[Corollary~1.5.2]{Farkas2003}) it follows that $\theta = k$. Hence, by the arguments given in the first part of the proof of \cite[Theorem~4.10]{Dorn2008}, $\theta$ equals the greatest common divisor of all cycle lengths which occur in the graph $G$.

(ii) $\Rightarrow$ (i): Suppose that there exist a strictly positive projection $P$ on $X$ such that $(T(t))_{t \geq 0}$ commutes with $P$ and satisfies the assertions from Definition~\ref{def:asymptotically-periodic}. Then $X = PX \oplus \ker P$ and we have 
\begin{align*}
\sigma(T(1)) = \sigma(T(1)|_{PX}) \cup \sigma(T(1)|_{\ker P}).
\end{align*}
Moreover, $\sigma(T(1)|_{PX}) \subseteq \bbT$ as $T(1)|_{PX}$ is doubly power-bounded and $\sigma(T(1)|_{\ker P})$ is contained in a ball with center $0$ with radius $0 < r < 1$ because $(T(1))^n|_{\ker P}$ tends to $0$ with respect to the operator norm as $n \to \infty$. Moreover, we have $\sigma(\bbB) = \sigma(T(1))$ and therefore, the peripheral spectrum of $\bbB$ is isolated. So denote by $\bbP$ the spectral projection onto the peripheral spectrum of $\bbB$. Then $\calM_\bbP$ is the spectral projection onto the peripheral spectrum of $T(1)$ and it follows from the uniqueness of the spectral projection that $P = \calM_\bbP$. 

Moreover, $(T(t)\rvert_{PX})_{t \geq 0}$ is periodic. Denote the period of $(T(t)\rvert_{PX})_{t \geq 0}$  in the following by $\theta$. Since $(T(t)\rvert_{PX})_{t \geq 0}$ is periodic, one has $1 \in \sigma_p(\bbB \rvert_{\bbT})$. Hence, \cite[Proposition~2.1]{Keicher2008a} shows that there exists a strictly positive fixed vector $x \in \ell^1$ of $\bbB$ and this fixed vector is clearly an element of $\bbP \ell^1$. In particular, $f \otimes x$ is an element of $L^\infty([0,1]; \bbP \ell^1)$ for each $f \in L^\infty([0,1])$. Now choose $k \in \N$ such that $k \leq \theta < k + 1$. Suppose that $k < \theta$ and fix $\varepsilon < \theta - k$. Then
\begin{align*}
\one_{[0, \varepsilon]}(s) x &= (\one_{[0, \varepsilon]} \otimes \, x) (s) = T(\theta) (\one_{[0, \varepsilon]} \otimes \, x) (s) \\
&= \one_{[0, \varepsilon]}(\theta + s - k) \bbB^k x = \one_{[0, \varepsilon]}(\theta + s - k) x
\end{align*}
for a.e.\ $s \in [0,1]$ which is absurd. Hence, we infer that $\theta = k$ which means that $T(1)\rvert_{PX}$ is algebraic, i.e., mapped to $0$ by a polynomial. Therefore, the spectrum of $T(1)\rvert_{PX}$ is finite and consists of poles of the resolvent and, by Proposition~\ref{prop:mult-op-spectral-properties}(iii), the same holds true for the spectrum of $\bbB\rvert_{\bbP \ell^1}$. Now it follows just as in the proof of \cite[Theorem~4.10]{Dorn2008} that the peripheral spectrum of $\bbB$ consists of entirely of first order poles which implies that $\bbB$ is quasi-compact.
\end{proof}

Consider again the case where not all velocities $c_j$ are equal to $1$ but the velocities $c_j$ are linearly dependant over $\Q$, i.e., $\frac{c_i}{c_j} \in \Q$ for all $i, j \in J$, with a common finite multiplier $c > 0$, i.e.,
\begin{align*}
\ell_j := \frac{c}{c_j} \in \N \qquad \text{for all } j \in J.
\end{align*}
Analogously to the $L^1$-case, consider the differential operator 
\begin{align*}
A_C :=  \diag\bigg(c_j \cdot \frac{\ud}{\ud x} \bigg), \quad \dom A_C := \set{f \in W^{1, \infty}([0, 1]; \ell^1) : f(1) = \bbB^C f(0)}.
\end{align*}
By using the same construction as mentioned in the former section one shows that the operator $A_C$ generates a bi-continuous semigroup $(T_C(t))_{t \geq 0}$ on $L^\infty([0, 1]; \ell^1)$ (see \cite[Proposition 3.11]{Budde2019}) and that there exists a strictly positive isomorphism $\bbS \in \calL(L^\infty([0, 1]; \ell^1))$ such that
\begin{align*}
T_C(ct) f = \bbS^{-1} T(t) \bbS f \qquad (f \in L^1([0, 1]; \ell^1))
\end{align*}
for all $t \geq 0$. Consequently, we obtain the following result. 

\begin{corollary}
Let $X:= L^\infty([0, 1]; \ell^1)$. Suppose that $\bbB$ is quasi-compact and irreducible. Then there is a strictly positive projection $P_C$ commuting with the semigroup $(T_C(t))_{t \geq 0}$ with the following properties:
\begin{enumerate}[\upshape (i)]
\item $(T_C(t)\rvert_{P_CX})_{t \geq 0}$ can be extended to a positive, periodic $C_0$-group on the Banach lattice $P_C X$.
\item $(T(t)\rvert_{\ker P_C})_{t \geq 0}$ is uniformly exponentially stable. 
\end{enumerate}
\end{corollary}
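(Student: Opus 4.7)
The plan is to reduce to the unit-velocity flow $(T(t))_{t \geq 0}$ on $L^\infty([0, 1]; \ell^1)$ analyzed in Subsection~\ref{subsection:Bi-continuous-flows-on-L-infty} and then to transfer the conclusion to $(T_C(t))_{t \geq 0}$ through the intertwining isomorphism $\bbS$ introduced just above the statement.

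First I would apply Theorem~\ref{theorem:asymptotic-periodicity-mult-op-semigroup-lp} to $(T(t))_{t \geq 0}$ itself. This bi-continuous semigroup is bounded (in fact contractive), positive, and satisfies $T(1) = \calM_\bbB$. The hypotheses of the abstract theorem are met since $\bbB$ is irreducible with $r(\bbB) = 1$ and quasi-compactness of $\bbB$ forces $r(\bbB)$ to be a pole of the resolvent. The theorem then furnishes a strictly positive projection $P$ on $L^\infty([0, 1]; \ell^1)$ commuting with $(T(t))_{t \geq 0}$ such that $(T(t)\rvert_{PX})_{t \geq 0}$ extends to a positive, periodic group on the Banach lattice $PX$ and $\norm{T(t)\rvert_{\ker P}} \to 0$. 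To upgrade uniform stability to uniform exponential stability I would argue without appealing to any regularity of the semigroup: once $\norm{T(t_0)\rvert_{\ker P}} < 1$ for some $t_0 > 0$, writing $t = n t_0 + r$ with $0 \leq r < t_0$ and combining the semigroup law with the boundedness of $(T(t))$ on $[0, t_0)$ yields an estimate of the form $\norm{T(t)\rvert_{\ker P}} \leq M \ue^{-\varepsilon t}$. This replaces the use of \cite[Proposition~V.1.7]{Engel2000} in Proposition~\ref{proposition:l1-asymptotics-simple-case}, which required strong continuity.

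Finally, I would set $P_C := \bbS^{-1} P \bbS$ and derive the claims for $(T_C(t))_{t \geq 0}$ by conjugation. Since $\bbS$ is a strictly positive Banach lattice isomorphism, so is $\bbS^{-1}$, and hence $P_C$ is a strictly positive projection on $X$. The intertwining identity $T_C(ct) = \bbS^{-1} T(t) \bbS$ gives immediately $P_C T_C(ct) = T_C(ct) P_C$ for every $t \geq 0$, and it conjugates the restrictions of $(T(t))$ to $PX$ and $\ker P$ with the corresponding restrictions of $(T_C(t))$ to $P_C X$ and $\ker P_C$. Thus the periodic group extension on $PX$ is transported to a periodic group extension on $P_C X$, and the exponential decay on $\ker P$ becomes an exponential decay on $\ker P_C$ after rescaling time by the factor $c$. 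The main point requiring care is the correct preservation of the Banach lattice structure and of strict positivity under the conjugation by $\bbS$; the remainder is a routine consequence of the intertwining relation.
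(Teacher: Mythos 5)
Your proposal is correct and follows essentially the same route the paper takes: treat the unit-velocity bi-continuous semigroup on $L^\infty([0,1];\ell^1)$ by Theorem~\ref{theorem:asymptotic-periodicity-mult-op-semigroup-lp} (quasi-compactness of $\bbB$ together with $r(\bbB)=1$ forces the peripheral spectrum to consist of poles), then transport everything to $(T_C(t))_{t\ge 0}$ via $P_C:=\bbS^{-1}P\bbS$ and the intertwining relation $T_C(ct)=\bbS^{-1}T(t)\bbS$; the paper offers no argument beyond this. Where you add something of substance is assertion~(ii): the paper's only stated mechanism for upgrading uniform stability to uniform \emph{exponential} stability is Proposition~V.1.7 of Engel--Nagel, which presupposes strong continuity and is unavailable for the bi-continuous flow, and the text is silent on how the exponential rate in the corollary is obtained. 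Your elementary argument --- choose $t_0$ with $\norm{T(t_0)\rvert_{\ker P}}<1$, which is possible since $r(T(1)\rvert_{\ker P})<1$, then write $t=nt_0+r$ and use the (global) contractivity of the semigroup to bound the remainder --- is valid, needs only boundedness rather than strong continuity, and is exactly what is required to justify the exponential decay claimed in assertion~(ii). One caveat: the conjugation yields a positive periodic group on $P_CX$, but, like the paper, you do not address the ``$C_0$'' qualifier in assertion~(i), which is questionable on an $L^\infty$-type space; your more cautious wording (``periodic group'') is the defensible one.
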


\section{Extension to $M([0,1]; \ell^1)$} \label{section:applications-to-transport-equations-in-infinite-networks-measures}

In this section, it is shown that the semigroup from Section~\ref{subsection:flows-on-l1} can be naturally extended to the space of $\ell^1$-valued measures of bounded variation and it is be shown that this extension has still some regularity properties. Furthermore, the abstract argument from Section~\ref{section:abstract-asymptotics-result} can be used to show that this extended semigroup is still asymptotically periodic. This is motivated by \cite{Matrai2007}, where something similar was done for a different kind of flow on finite networks. Furthermore, the asymptotic behaviour of such an extended semigroup was investigated in \cite[Corollary~4.17]{Matrai2007} (just for the case of finite networks) but this investigation relied on a non-constructive argument. 

For the introduction of $\ell^1$-valued measures of bounded variation and for the used notation, we refer the reader to Appendix~\ref{appendix:vector-valued-measures-and-the-bartle-integral}.

Define the operators $S(t): M([0, 1]; \ell^1) \to M([0, 1]; \ell^1)$, $t \geq 0$, by
\begin{align*}
(S(t) \mu) = \delta_{-(t - n)} \ast (\calM_{\bbB^n} \mu\rvert_{[t - n, 1]}) + \delta_{n + 1 - t} \ast (\calM_{\bbB^{n + 1}}\mu\rvert_{[0, t - n)}),
\end{align*}
where $n \in \N_0$ such that $n \leq t < n + 1$. In particular, one has
\begin{align*}
(S(t) \mu)(A) &= \bbB^n \mu((A \cap [t - n, 1]) - (t - n)) \\
& \qquad + \bbB^{n + 1} \mu((A \cap [0, t - n)) + n + 1 - t),
\end{align*}
for each Borel measurable set $A \in \calB([0, 1])$. It is not hard to see that $(S(t))_{t \geq 0}$ is a contractive semigroup on $M([0, 1]; \ell^1)$.

\begin{lemma}
$(S(t))_{t \geq 0}$ defines a contractive semigroup on $M([0, 1]; \ell^1)$ such that $S(1) = \calM_\bbB$. 
\end{lemma}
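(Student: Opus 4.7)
The plan is to verify four claims: $S(0) = \id$, $S(1) = \calM_\bbB$, boundedness with $\norm{S(t)} \leq 1$, and the semigroup law $S(s)S(t) = S(s+t)$. The first two follow immediately from the defining formula: setting $t = 0$ forces $n = 0$, so the first summand reduces to $\mu\rvert_{[0, 1]} = \mu$ while the second summand vanishes because it is supported on the empty set $[0, 0)$; setting $t = 1$ forces $n = 1$ and $t - n = 0$, so the first summand reduces to $\calM_\bbB \mu$ and the second again vanishes.

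For contractivity I would decompose $\mu$ into the two mutually singular pieces $\mu\rvert_{[0, t - n)}$ and $\mu\rvert_{[t - n, 1]}$ and combine the triangle inequality for the total variation norm with the translation-invariance $\norm{\delta_a \ast \rho}_{TV} = \norm{\rho}_{TV}$, the operator-norm bound $\norm{\calM_{\bbB^k} \rho}_{TV} \leq \norm{\bbB}^k \norm{\rho}_{TV}$ valid for the measure-valued multiplication operator (see Remark~\ref{remark:multiplication-operators-on-measures}), $\norm{\bbB} = 1$, and the additivity of the total variation on disjoint Borel pieces. These ingredients combine to give
\begin{align*}
\norm{S(t)\mu}_{TV} \leq \norm{\mu\rvert_{[t - n, 1]}}_{TV} + \norm{\mu\rvert_{[0, t - n)}}_{TV} = \norm{\mu}_{TV}.
\end{align*}

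The main obstacle is the semigroup law. My plan is to evaluate both $(S(s)S(t)\mu)(A)$ and $(S(s+t)\mu)(A)$ on an arbitrary Borel set $A \subseteq [0, 1]$ via the equivalent pointwise formula
\begin{align*}
(S(t)\mu)(A) = \bbB^n \mu\bigl((A + \tau) \cap [\tau, 1]\bigr) + \bbB^{n+1} \mu\bigl((A + \tau - 1) \cap [0, \tau)\bigr),
\end{align*}
with $\tau = t - n$, and analogously for $s$ with $m = \lfloor s \rfloor$, $\sigma = s - m$. Expanding $(S(s)(S(t)\mu))(A)$ produces four summands whose $\bbB$-factors are $\bbB^{m+n}$, $\bbB^{m+n+1}$, and $\bbB^{m+n+2}$. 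Matching these with the two summands of $(S(s+t)\mu)(A)$ splits into two cases: if $\sigma + \tau < 1$ then $\lfloor s + t \rfloor = m + n$, and otherwise $\lfloor s + t \rfloor = m + n + 1$. In each case the four summands collapse into two via $\bbB^j \bbB^k = \bbB^{j+k}$ and elementary set-theoretic identities for intersections of shifted intervals, such as $\bigl((A + \sigma) \cap [\sigma, 1] + \tau\bigr) \cap [\tau, 1] = (A + \sigma + \tau) \cap [\sigma + \tau, 1]$ when $\sigma + \tau < 1$. Conceptually this captures the fact that $S(t)$ acts as the shift by $-t$ on an ``unrolled'' measure on $[0, \infty)$ restricted back to $[0, 1]$; the case analysis is the only nuisance, and it is forced on us because working with an abstract unrolled-measure picture would require careful bookkeeping at integer boundaries where atoms of $\mu$ could otherwise cause apparent discrepancies.
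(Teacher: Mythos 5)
Your proposal is correct and follows essentially the same route as the paper: the real content in both cases is the contractivity estimate, obtained by splitting $\mu$ at the wrap-around point $t-n$, using that $\bbB$ is a contraction and that the variation norm is additive over the two disjoint pieces --- the paper merely unfolds your norm identities (translation invariance, $\norm{\calM_{\bbB^k}}\leq\norm{\bbB}^k$, additivity of $\abs{\mu}$) into an explicit supremum over finite partitions of $[0,1]$. The paper dismisses the semigroup law and the identity $S(1)=\calM_\bbB$ as ``easy to see'', so your case analysis for the semigroup law is more detailed than what appears in print, but it is not a different method.
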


\begin{proof}
It is easy to see that $(S(t))_{t \geq 0}$ defines a semigroup on $M([0, 1]; \ell^1)$ such that $S(1) = \calM_\bbB$. 

To see that $(S(t))_{t \geq 0}$ is contractive, let $\mu \in  M([0, 1]; \ell^1)$ and $A_1, \dots, A_N \subseteq [0, 1]$ measurable, pairwise disjoint sets such that $[0, 1] = \bigcup_{k = 1}^N A_k$. Now let $t > 0$ and pick $n \in \N$ such that $n \leq t < n + 1$ and consider the sets
\begin{align*}
B_k := (A_k \cap [t - n, 1]) - (t - n), \quad C_k := (A_k \cap [0, t - n)) + (1 + n - t)
\end{align*}
for $k = 1, \dots, N$. Observe that all these sets are pairwise disjoint and $[0, 1] = \bigcup_{n \in \N} B_k \cup C_k$. Hence,
\begin{align*}
\sum_{k = 1}^N \norm{S(t) \mu(A_k)}_1 &\leq \sum_{k = 1}^N \norm{\bbB^n \mu(B_k)}_1 + \sum_{k = 1}^N \norm{\bbB^{n + 1} \mu(C_k)}_1 \\
&\leq \sum_{k = 1}^N \norm{\mu(B_k)}_1 + \sum_{k = 1}^N \norm{\mu(C_k)}_1 \leq \norm{\mu}
\end{align*}
and therefore $\norm{S(t) \mu} \leq \norm{\mu}$ by taking the supremum over all finite partitions of the unit interval $[0,1]$.
\end{proof}

In the following, we call $(S(t))_{t \geq 0}$ the \textit{extended flow semigroup} on $M([0,1]; \ell^1)$. This is motivated by the following result which shows that the semigroup $(S(t))_{t \geq 0}$ actually extends the semigroup $(T(t))_{t \geq 0}$ on $L^1([0, 1]; \ell^1)$ from Section~\ref{subsection:flows-on-l1} for the case that all velocities $c_j$ are equal to $1$. To clarify what we mean by this extension, note if $f \in L^1([0, 1]; \ell^1)$, then 
\begin{align*}
(f \,\ud s): \calB([0, 1]) \to \ell^1, \quad (f \, \ud s)(A) = \int_A f \, \ud s
\end{align*}
defines a $\ell^1$-valued measure of bounded variation with $\norm{f \, \ud s} = \norm{f}_1$. Hence,
\begin{align*}
L^1([0, 1]; \ell^1) \to M([0, 1]; \ell^1), \quad f \mapsto f \, \ud s
\end{align*}
is a linear isometry and therefore, in what follows, elements of $L^1([0, 1]; \ell^1)$ will be freely identified with their respective images in $M([0, 1]; \ell^1)$ under this embedding.

\begin{proposition}
The identity
\begin{align*}
(T(t) f) \, \ud s = S(t) (f \, \ud s)
\end{align*}
holds for all $f \in L^1([0, 1]; \ell^1)$.
\end{proposition}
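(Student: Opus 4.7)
The strategy is to verify the claimed identity of $\ell^1$-valued measures on $[0, 1]$ by evaluating both sides on an arbitrary Borel set $A \in \calB([0, 1])$. Fix $t \geq 0$ and choose $n \in \N_0$ with $n \leq t < n + 1$; the point $n + 1 - t \in (0, 1]$ will serve as the natural splitting of $[0, 1]$ corresponding to the two cases in the formula for $(T(t) f)(s)$.

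For the left-hand side, on $[0, n + 1 - t]$ one has $s + t \in [t, n + 1]$, hence $(T(t) f)(s) = \bbB^n f(s + t - n)$, while on $(n + 1 - t, 1]$ one has $s + t \in (n + 1, 1 + t]$, hence $(T(t) f)(s) = \bbB^{n + 1} f(s + t - n - 1)$. Thus $\bigl((T(t) f) \, \ud s\bigr)(A)$ splits into two integrals over $A \cap [0, n + 1 - t]$ and $A \cap (n + 1 - t, 1]$. Performing the substitutions $u = s + t - n$ and $u = s - (n + 1 - t)$, respectively, these become
\begin{align*}
\bigl((T(t) f) \, \ud s\bigr)(A) = \bbB^n \!\! \int_{(A + (t - n)) \cap [t - n, 1]} \!\!\!\! f(u) \, \ud u + \bbB^{n + 1} \!\! \int_{(A - (n + 1 - t)) \cap [0, t - n)} \!\!\!\! f(u) \, \ud u,
\end{align*}
where swapping a closed endpoint for an open one is harmless since Lebesgue measure is non-atomic.

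For the right-hand side, I unfold the definition of $S(t)(f \, \ud s)$ using two elementary facts: first, the convolution identity $(\delta_a \ast \nu)(A) = \nu(A - a)$, which produces the set translations $A + (t - n)$ and $A - (n + 1 - t)$; second, $\bigl(\calM_{\bbB^k}(f \, \ud s)\bigr)(B) = \bbB^k \int_B f \, \ud u$ for every Borel $B$. The two summands of $S(t)(f \, \ud s)$ evaluated at $A$ then match the two integrals in the displayed equation above exactly, and the identity of measures follows.

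The only delicate point is bookkeeping: keeping track of how the translations $A \mapsto A + (t - n)$ and $A \mapsto A - (n + 1 - t)$ interact with the restriction intervals $[t - n, 1]$ and $[0, t - n)$. Once the signs are in place, the argument is a direct computation and no genuine feature of the semigroup beyond the explicit formula for $T(t)$ is needed.
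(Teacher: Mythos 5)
Your proof is correct, and it reaches the identity by a somewhat more elementary route than the paper. The paper first reduces the equality of the two measures to their agreement on intervals $[a,b]$ via Dynkin's lemma and reduces to $0 \leq t < 1$ via the semigroup law, and then computes both sides through the Bartle-integral identity \eqref{eq:integration-against-convolution-of-measures}; you instead evaluate both measures on an arbitrary Borel set $A$ for arbitrary $t \geq 0$, which is legitimate because the substitution $\int_{A_1} f(s+c)\,\ud s = \int_{A_1+c} f(u)\,\ud u$ holds for every measurable $A_1$ by translation invariance of Lebesgue measure. This dispenses with both Dynkin's lemma and the semigroup-law reduction, at the cost of the index bookkeeping you describe; your identification of the splitting point $n+1-t$ and your dismissal of the endpoint discrepancies as Lebesgue-null are both correct. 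One point deserves emphasis: your convolution identity $(\delta_a \ast \nu)(A) = \nu(A-a)$ is the one forced by \eqref{eq:integration-against-convolution-of-measures}, and it is the convention under which the proposition is actually true (and which the paper's own computation tacitly uses); it is opposite in sign to the pointwise formula $(\tau_t\mu)(A) = \mu(A-t)$ and to the displayed ``in particular'' expression for $(S(t)\mu)(A)$ given after the definition of $S(t)$, both of which carry a sign slip. Since you unfold the convolution form of the definition, your two summands agree term by term with your expression for $\bigl((T(t)f)\,\ud s\bigr)(A)$, and the argument is complete.
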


\begin{proof}
Let $f \in L^1([0, 1]; \ell^1)$. By Dynkin's Lemma~\ref{lemma:dynkin} it is enough to show that both measures $(T(t) f) \, \ud s$ and $S(t) (f \, \ud s)$ coincide on all intervals $[a, b]$, where $0 \leq a \leq b \leq 1$ and by the semigroup law it suffices to suppose $0 \leq t < 1$. Firstly, one has
\begin{align*}
(T(t) f) \, \ud s([a, b]) &= \int_0^1 \one_{[a, b]}(s) T(t) f(s) \, \ud s \\
&= \int_{[a, 1 - t]} f(t + s) \, \ud s + \int_{[1 - t, b]} \bbB f(t + s - 1) \, \ud s.
\end{align*}
On the other hand,
\begin{align*}
S(t) (f \, \ud s)([a, b]) &= \int_0^1 \one_{[a, b]}(s) \, \ud (\delta_{-t} \ast (f \, ds)\rvert_{[t, 1]})(s) \\
&\quad + \int_0^1 \one_{[a, b]}(s) \, \ud (\delta_{1 - t} \ast \calM_\bbB (f \, ds)\rvert_{[t, 1]})(s) \\
&= \int_{[t, 1]} \one_{[a, b]}(s - t) f(s) \, \ud s + \int_{[0, t)} \one_{[a, b]}(s + 1 - t) \bbB f(s) \, \ud s \\
&= \int_{[0, 1 - t]} \one_{[a, b]}(s) f(s + t) \, \ud s + \int_{[1 - t, 1)} \one_{[a, b]}(s) \bbB f(s + t - 1) \, \ud s \\
&= \int_{[a, 1 - t]} f(s + t) \, \ud s + \int_{[1 - t, b]} \bbB f(s + t - 1) \, \ud s. \\
\end{align*}
Hence, $(T(t) f) \, \ud s = S(t) (f \, \ud s)$ for all $f \in L^1([0, 1]; \ell^1)$ and $t \geq 0$. 
\end{proof}
Now the regularity of $(S(t))_{t \geq 0}$ is investigated. In the appendix, the nilpotent left shift semigroup $(\tau_0(t))_{t \geq 0}$ on the space $M([0, 1]; E')$ is defined for an arbitrary Banach space $E$. For $E = c_0$, i.e., $E' = \ell^1$, we obtain the following regularity result as a direct consequence of Proposition~\ref{prop:nilpotent-shift-semigroup-continuity}.

\begin{lemma} \label{lemma:nilpotent-shift-semigroup-continuity-c-0}
The nilpotent left shift semigroup $(\tau_0(t))_{t \geq 0}$, given by 
\begin{align*}
(\tau_0(t) \mu)(A) := (\delta_{-t} \ast \mu\rvert_{[t, 1]})(A) = \mu((A \cap [t, 1]) - t) \qquad (A \in \calB([0, 1])),
\end{align*}
is weak$^\ast$-continuous on $M([0, 1]; \ell_1)$.
\end{lemma}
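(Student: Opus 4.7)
Weak$^\ast$-continuity amounts to showing that for each fixed $\mu \in M([0,1]; \ell^1)$ and each test function $\phi$ from the predual of $M([0,1]; \ell^1)$ (a subspace of $C([0,1]; c_0)$ carrying a vanishing condition at the appropriate endpoint, described via the Bartle integral in the appendix), the scalar map $t \mapsto \int_{[0,1]} \phi \, d(\tau_0(t) \mu)$ is continuous on $[0, \infty)$. The first step is to rewrite this pairing. Using the definition $\tau_0(t) \mu = \delta_{-t} \ast (\mu\rvert_{[t,1]})$ together with the translation-invariance of the Bartle integral and the $c_0$-$\ell^1$ duality $\dual{\argument, \argument}$, one obtains
$$\int_{[0,1]} \phi(r) \, d(\tau_0(t) \mu)(r) = \int_{[t,1]} \dual{\phi(s-t), d\mu(s)}.$$

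Fix $t_0 \geq 0$. The difference between the pairings at $t$ and at $t_0$ splits naturally into a bulk integral over $[\max(t, t_0), 1]$ with integrand $\phi(s-t) - \phi(s-t_0)$, plus a boundary integral over the short interval between $t$ and $t_0$. Since $\phi\colon [0,1] \to c_0$ is continuous on a compact interval, it is uniformly continuous, so $\sup_s \norm{\phi(s-t) - \phi(s-t_0)}_{c_0} \to 0$ as $t \to t_0$; combined with the finiteness of the scalar total variation $\abs{\mu}$, dominated convergence drives the bulk integral to $0$. The boundary integral causes no trouble when $\mu$ has no atom at $t_0$, because then $\abs{\mu}$ of the shrinking interval tends to $0$ and $\phi$ is bounded.

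The main obstacle is a possible atom of $\mu$ at the cutoff point $t_0$. In that case the one-sided limit of the boundary integral equals $\dual{\phi(0), \mu(\set{t_0})}$ (in the left-shift interpretation of $\tau_0$, where mass is lost at $0$), and this is killed precisely by the vanishing boundary condition $\phi(0) = 0$ built into the definition of the predual --- which is exactly the reason the predual is specified with that condition. Once this potential atomic contribution has been neutralised, the decomposition above yields continuity of $t \mapsto \dual{\phi, \tau_0(t) \mu}$ at $t_0$, and since $\mu$ and $\phi$ were arbitrary, the semigroup is weak$^\ast$-continuous on $M([0,1]; \ell^1)$.
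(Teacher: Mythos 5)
Your route is genuinely different from the paper's: the paper deduces the lemma from the general Proposition~\ref{prop:nilpotent-shift-semigroup-continuity} for $M([0,1];E')$ with $E$ separable, whose proof is a single computation followed by an appeal to Bartle's bounded convergence theorem (Theorem~\ref{theorem:bartles-bounded-convergence-theorem}), whereas you argue directly via uniform continuity of the test function and finiteness of the total variation. That part of your argument is fine. The gap is in the step where you dispose of the atomic boundary term. By the vector-valued Riesz representation theorem (Theorem~\ref{theorem:riesz-representation-theorem}) the predual of $M([0,1];\ell^1)$ is $C_0([0,1];c_0)$, and since $[0,1]$ is compact this is all of $C([0,1];c_0)$: there is no boundary condition $\phi(0)=0$ (or $\phi(1)=0$) built into the predual, so the cancellation you invoke is not available. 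Consequently the term $\dual{\phi(0),\mu(\set{t_0})}$ that you correctly isolate does not vanish in general, and your proof does not close for measures with an atom at the point being absorbed. Concretely, with the pairing $\dual{\phi,\tau_0(t)\mu}=\int_{[t,1]}\dual{\phi(s-t),\ud\mu(s)}$ that you (and the paper's appendix) use, the measure $\mu=\delta_0\otimes x$ satisfies $\tau_0(t)\mu=0$ for every $t>0$, so $\dual{\phi,\tau_0(t)\mu}=0$ while $\dual{\phi,\mu}=\dual{\phi(0),x}$, and the orbit fails to be weak$^\ast$-continuous at $t=0$ against any $\phi$ with $\phi(0)\neq 0$.

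To your credit, you have put your finger on a real subtlety. The paper's own proof of Proposition~\ref{prop:nilpotent-shift-semigroup-continuity} applies Bartle's bounded convergence theorem to the functions $g_t(s)=\one_{[t,1]}(s)\widetilde f(s-t)$, and the pointwise limit of $g_t(0)$ as $t\searrow 0$ is $0$ rather than $f(0)$, so the hypothesis of pointwise convergence everywhere fails at the single point $s=0$ and the same atomic example defeats that argument as well. The difference is that the paper passes over the difficulty in silence, while you see it but repair it with a property the predual does not possess. Any genuine fix must do something else --- exclude atoms at the absorbing endpoint, shrink the space of test functions (which changes the topology in question), or settle for continuity of the orbits away from the absorption times --- and in no case does it come for free from the definition of the predual.
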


Now we turn back our attention to the flow semigroup and its regularity.

\begin{theorem}
The flow semigroup $(S(t))_{t \geq 0}$ is a weak$^\ast$-continuous semigroup on $M([0, 1]; \ell^1)$.
\end{theorem}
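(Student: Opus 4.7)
The plan is to establish weak$^\ast$-continuity of $(S(t))_{t \geq 0}$ via a decomposition that reduces everything to known facts about nilpotent shifts and the operator-valued multiplication $\calM_\bbB$. By the semigroup identity $S(n + t) = S(t)\, \calM_{\bbB^n}$, together with the weak$^\ast$-continuity of the fixed operator $\calM_{\bbB^n}$ (see Remark~\ref{remark:multiplication-operators-on-measures}), it suffices to prove continuity of $t \mapsto S(t)\mu$ on the unit interval $[0, 1]$.

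For $t \in [0, 1)$ we rewrite the defining convolution formula of $S(t)\mu$ in the form
\[
S(t)\mu = \tau_0(t)\mu + \calM_\bbB\bigl(\tau_1(t)\mu\bigr), \qquad \tau_1(t)\mu := \delta_{1-t} \ast (\mu\rvert_{[0,t)}),
\]
using that convolution by a Dirac mass commutes both with the multiplication operator $\calM_\bbB$ and with restriction to a Borel set. The first summand is weak$^\ast$-continuous in $t$ directly by Lemma~\ref{lemma:nilpotent-shift-semigroup-continuity-c-0}.

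For the second summand, the key idea is a reflection argument. The involution $R \colon [0, 1] \to [0, 1]$, $Rs := 1 - s$, induces by pushforward an isometric weak$^\ast$-homeomorphism $R_\ast$ of $M([0, 1]; \ell^1)$. A direct computation on Dirac masses yields the intertwining identity $\tau_1(t) = R_\ast\, \tau_0(1 - t)\, R_\ast^{-1}$ (up to endpoint atom conventions at the single point $t$, which do not affect weak$^\ast$-limits). Since $t \mapsto \tau_0(1 - t)\nu$ is weak$^\ast$-continuous on $[0, 1]$ by Lemma~\ref{lemma:nilpotent-shift-semigroup-continuity-c-0} (reversing time), and conjugation by $R_\ast$ preserves weak$^\ast$-continuity, the family $t \mapsto \tau_1(t)\mu$ is weak$^\ast$-continuous. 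Since $\calM_\bbB$ is weak$^\ast$-to-weak$^\ast$ continuous on $M([0, 1]; \ell^1)$ by Remark~\ref{remark:multiplication-operators-on-measures}, the composition $\calM_\bbB \circ \tau_1(\argument)$ is also weak$^\ast$-continuous.

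The main obstacle is verifying the weak$^\ast$-continuity of the nilpotent right shift $\tau_1$, since Lemma~\ref{lemma:nilpotent-shift-semigroup-continuity-c-0} addresses only the left shift. The spatial reflection $s \mapsto 1 - s$ resolves this by conjugating $\tau_1$ back to $\tau_0$. The remaining steps—summing the two weak$^\ast$-continuous maps, matching the boundary value at $t = 1$ through $S(1) = \calM_\bbB$, and extending beyond $[0, 1]$ via the semigroup law—are routine.
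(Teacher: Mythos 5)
Your decomposition of $S(t)$ for $t\in[0,1)$ into the nilpotent left shift plus a wrap-around term is the same starting point as the paper's, and handling the first summand via Lemma~\ref{lemma:nilpotent-shift-semigroup-continuity-c-0} is exactly what the paper does. The problems are in the second summand. First, you invoke weak$^\ast$-to-weak$^\ast$ continuity of $\calM_\bbB$ and attribute it to Remark~\ref{remark:multiplication-operators-on-measures}, but that remark only asserts that $\calM_\bbB$ is a bounded operator. Weak$^\ast$-continuity of $\calM_\bbB$ on $M([0,1];\ell^1)=C([0,1];c_0)'$ would force $\bbB$ to be the adjoint of an operator on $c_0$, i.e.\ the transposed matrix would have to map $c_0$ into $c_0$; for a locally finite graph in which some vertex has infinitely many incoming edges this fails (a single row of $\bbB$ can be bounded away from $0$ along infinitely many columns). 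The step is repairable: since $\calM_\bbB$ commutes with restriction and with convolution by Dirac masses---as you note yourself---the wrap-around term equals $\delta_{1-t}\ast(\calM_\bbB\mu)\rvert_{[0,t)}$, a shift applied to the \emph{fixed} measure $\calM_\bbB\mu$, which is precisely how the paper's formula for $S(t)$ is organized; no continuity of $\calM_\bbB$ is then needed. The same remark disposes of your use of weak$^\ast$-continuity of $\calM_{\bbB^n}$ in the reduction $S(n+t)\mu=S(t)(\calM_{\bbB^n}\mu)$.

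The more serious gap is the ``reversing time'' step. Lemma~\ref{lemma:nilpotent-shift-semigroup-continuity-c-0} is a statement about a \emph{semigroup}: what it furnishes (continuity of the orbit at the origin together with the semigroup law) is right-continuity of $s\mapsto\tau_0(s)\nu$. Your conjugated family $t\mapsto R_\ast\tau_0(1-t)R_\ast\mu$ runs through $\tau_0$ backwards, so even for the one-sided limit $t\searrow t_0$ it requires \emph{left}-continuity of $s\mapsto\tau_0(s)\nu$ at $s_0=1-t_0$. For a nilpotent, non-invertible semigroup this is a genuinely different property, not covered by the lemma; it happens to be provable, but only by a separate application of Bartle's bounded convergence theorem---which is exactly the direct estimate $\abs[\big]{\int_{[0,t)}\widetilde f(s+1-t)\,\ud(\calM_\bbB\mu)(s)}\to 0$ that the paper performs on the wrap-around term, without any reflection. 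In addition, the discrepancy you dismiss between $\mu\rvert_{[0,t)}$ and the closed-interval restriction produced by the reflection is a Dirac mass carrying the full atomic weight of $\mu$ at the point $t$, and a Dirac mass is not weak$^\ast$-negligible. So the outline for the second summand does not go through as written; replacing the reflection argument by the direct bounded-convergence estimate recovers the paper's proof.
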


\begin{proof}
Let $f \in C([0, 1], c_0)$ and $\mu \in M([0, 1]; \ell^1)$. As in the proof of Proposition~\ref{prop:nilpotent-shift-semigroup-continuity}, we consider the trivial extension $\widetilde f \in B(\R; c_0)$ of $f$ to $\R$. Then for $t \in [0, 1)$, one has 
\begin{align*}
\abs{\dual{f, S(t) \mu - \mu}} &= \abs{\dual{f, \delta_{-t} \ast \mu\rvert_{[t, 1]}} + \dual{f, \delta_{1 - t} \ast \calM_\bbB \mu\rvert_{[0, t)}} - \dual{f, \mu}} \\
&\leq \abs{\dual{f, \tau_0(t) \mu  - \mu}} + \abs[\bigg]{\int_\R \int_\R \widetilde f(s + r) \, \ud (\calM_\bbB \mu\rvert_{[0, t)})(s) \, \ud \delta_{1 - t}(r)} \\
&= \abs{\dual{f, \tau_0(t) \mu - \mu}} + \abs[\bigg]{\int_\R \widetilde f(s + 1 - t) \, \ud (\calM_\bbB \mu\rvert_{[0, t)})(s)} \\
&= \abs{\dual{f, \tau_0(t) \mu - \mu}} + \abs[\bigg]{\int_{[0, t)} \widetilde f(s + 1 - t) \, \ud (\calM_\bbB \mu)(s)} \longrightarrow 0,
\end{align*}
where the first term turns goes $0$ due to Lemma~\ref{lemma:nilpotent-shift-semigroup-continuity-c-0} and the second by the dominated convergence theorem (Theorem~\ref{theorem:dominated-convergence}).
\end{proof}

Using the framework of operator-valued multiplication operators, one can obtain an asymptotic result for the extended flow semigroup similar to \cite[Proposition~4.3]{Dorn2008}. The proof works analogously to our proof of Proposition~\ref{proposition:l1-asymptotics-simple-case} with the only difference being that one needs to employ Theorem~\ref{theorem:asymptotic-periodicity-mult-op-semigroup-measures} instead of Theorem~\ref{theorem:asymptotic-periodicity-mult-op-semigroup-lp}. 

\begin{proposition} \label{proposition:m-asymptotics-simple-case}
Let $X:= M([0, 1]; \ell^1)$. Suppose that $\bbB$ is quasi-compact and irreducible. Then there is a strictly positive projection $P$ commuting with the semigroup $(S(t))_{t \geq 0}$ with the following properties:
\begin{enumerate}[\upshape (i)]
\item $(S(t)\rvert_{PX})_{t \geq 0}$ can be extended to a positive, periodic $C_0$-group on the Banach lattice $PX$.
\item $(S(t)\rvert_{\ker P})_{t \geq 0}$ is uniformly exponentially stable, i.e., $\norm{S(t) - S(t) P} \leq M \ue^{-\omega t}$ for some constants $M \geq 1$ and $\omega > 0$. 
\end{enumerate}
\end{proposition}

\appendix

\section{Vector valued measures and the Bartle integral} \label{appendix:vector-valued-measures-and-the-bartle-integral}

The Pettis integral is a well-known generalisation of the Bochner integral which has various applications in operator theory. The Bartle integral, although not as general as the Pettis integral, is another such generalization far less known. It has many applications to integration theory in vector-valued function spaces and will be used it frequently throughout Section~\ref{section:applications-to-transport-equations-in-infinite-networks-measures} of this paper. In this paper it is used to identify the duality of the space of vector valued continuous functions that vanish at infinity. This brief appendix shall serve as a short introduction to the topic.

\subsection{Vector valued measures} \label{subsection:vector-valued-measures}

Let $(\Omega, \Sigma)$ be a measurable space and let $E$ be a Banach space. A $\sigma$-additive function $\mu: \Sigma \to E$ is called \textit{vector measure} if $\mu(\emptyset) = 0$. If $\mu$ is a vector measure, its \textit{variation} $\abs{\mu}: \Sigma \to [0, \infty]$ is 
\begin{align*}
\abs{\mu}(A) = \sup_{\pi} \sum_{B \in \pi} \norm{\mu(B)},
\end{align*}
where the supremum is taken over all finite partitions $\pi$ of $A$. The total variation $\abs{\mu}$ is a positive measure with $\norm{\mu(A)} \leq \abs{\mu}(A)$ for all $A \in \Sigma$. Moreover, $\mu$ is said to be of \textit{bounded variation} if $\abs{\mu}(\Omega) < \infty$. The set of all vector measures $\mu: \Sigma \to E$ with bounded variation is denoted by $M(\Omega; E)$. $M(\Omega; E)$ is clearly a vector space and one can show that $\norm{\mu} := \abs{\mu}(\Omega)$ defines a norm on $M(\Omega; E)$ that renders $M(\Omega; E)$ a Banach space. Obviously, Dynkin's lemma \cite[Lemma~A.1.3]{Hytoenen2016} still holds for vector valued measures.

\begin{lemma}[Dynkin] \label{lemma:dynkin}
Let $\mu, \nu: \Sigma \to E$ be two vector measures and $\calD \subseteq \Sigma$ with the following properties:
\begin{enumerate}
\item[\upshape(a)] $\Omega \in \calD$.
\item[\upshape(b)] $A, B \in \calD$ implies $A \cap B \in \calD$.
\item[\upshape(c)] The $\sigma$-algebra $\sigma(\calD)$ generated by $\calD$ coincides with $\Sigma$. 
\end{enumerate}
If $\mu(A) = \nu(A)$ for all $A \in \calD$, then $\mu = \nu$. 
\end{lemma}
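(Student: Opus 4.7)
The plan is to deduce this from the classical scalar-valued Dynkin lemma \cite[Lemma~A.1.3]{Hytoenen2016} by a Hahn--Banach reduction, which avoids redoing the $\pi$--$\lambda$ argument from scratch in the vector-valued setting. (I will read the hypothesis in (c) as equality on $\calD$, which is the only reading under which the conclusion is non-trivial.)

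First I would fix an arbitrary continuous linear functional $x' \in E'$ and define the two scalar set functions $\mu_{x'} := x' \circ \mu$ and $\nu_{x'} := x' \circ \nu$ on $\Sigma$. Using that $x'$ is linear and continuous together with the $\sigma$-additivity of $\mu$ (which by definition converges in the norm of $E$), one checks directly that $\mu_{x'}$ is a signed scalar measure; moreover it is of bounded variation because $\abs{\mu_{x'}}(\Omega) \leq \norm{x'}\, \abs\mu(\Omega) < \infty$, and analogously for $\nu_{x'}$. By hypothesis, $\mu$ and $\nu$ agree on $\calD$, and hence so do $\mu_{x'}$ and $\nu_{x'}$. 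Since $\calD$ is a $\pi$-system containing $\Omega$ and generating $\Sigma$ by (a)--(c), the classical Dynkin lemma for scalar measures yields $\mu_{x'} = \nu_{x'}$ on all of $\Sigma$.

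Next I would fix an arbitrary $A \in \Sigma$ and observe that the previous step gives
\begin{equation*}
\dual{\mu(A) - \nu(A), x'} = \mu_{x'}(A) - \nu_{x'}(A) = 0
\end{equation*}
for every $x' \in E'$. By a corollary of the Hahn--Banach theorem, this forces $\mu(A) - \nu(A) = 0$, and since $A \in \Sigma$ was arbitrary we conclude $\mu = \nu$.

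I do not expect any real obstacle here; the only point that needs a brief justification is that $x' \circ \mu$ is genuinely a scalar measure, for which one uses that $x'$ is continuous so that it interchanges with norm-convergent series. (If, alternatively, one prefers to avoid the scalar reduction, the same result follows by a direct $\pi$--$\lambda$ argument: the collection $\calL := \set{A \in \Sigma : \mu(A) = \nu(A)}$ is easily seen to be a Dynkin system since $\mu$ and $\nu$ are $\sigma$-additive vector measures with $\mu(\emptyset) = \nu(\emptyset) = 0$, and it contains the $\pi$-system $\calD$; by Dynkin's $\pi$--$\lambda$ theorem $\calL$ thus contains $\sigma(\calD) = \Sigma$.)
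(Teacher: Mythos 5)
Your proof is correct. The paper itself offers no argument at all for this lemma --- it simply remarks that Dynkin's lemma ``obviously'' carries over from the scalar case \cite[Lemma~A.1.3]{Hytoenen2016} --- so any comparison is with the implicit argument the author had in mind, which is presumably your parenthetical alternative: the collection $\set{A \in \Sigma : \mu(A) = \nu(A)}$ is a Dynkin system containing the $\pi$-system $\calD$, hence contains $\sigma(\calD) = \Sigma$. Your primary route (compose with an arbitrary $x' \in E'$, apply the scalar uniqueness lemma to the complex measures $x' \circ \mu$ and $x' \circ \nu$, and separate points with Hahn--Banach) is equally valid and has the advantage of quoting the scalar result as a black box rather than re-running the $\pi$--$\lambda$ argument. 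Two minor remarks: you are right that the hypothesis must read ``$\mu(A) = \nu(A)$ for all $A \in \calD$'' --- as printed the lemma is a tautology, so this is a typo in the statement. Also, the estimate $\abs{\mu_{x'}}(\Omega) \leq \norm{x'} \abs{\mu}(\Omega)$ is not needed (and the right-hand side could be infinite, since the lemma assumes only $\sigma$-additivity, not bounded variation); it suffices that $x' \circ \mu$ is a finite-valued $\sigma$-additive scalar set function, which is automatically a complex measure to which the scalar uniqueness lemma applies.
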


\subsection{The Bartle integral}
Let $(\Omega, \Sigma)$ be a measurable space and let $E$ be a Banach space. Then function $f : \Omega \to E$ is called \textit{simple} if there are finitely many vectors $x_1, \dots, x_n \in E$ and sets $A_1, \dots A_n \in \Sigma$ such that $f = \sum_{i = 1}^n \one_{A_i} x_i$. The space of simple functions on $\Omega$ with values in $E$ will be denoted by $S(\Omega; E)$. If $f \in S(\Omega; E)$ and $\mu \in M(\Omega; E')$ then 
\begin{align*}
\int_\Omega f \, \ud \mu := \sum_{i = 1}^n \dual{x_i, \mu(A_i)} \in \bbC
\end{align*}
is independent of the representation of $f$ and is called the \textit{Bartle integral} of $f$ with respect to $\mu$ (cf.\ \cite[Section~4]{Bartle1956}). In particular, the map
\begin{align*}
S(\Omega; E) \times M(\Omega; E') \to \bbC, \quad (f, \mu) \mapsto \int_\Omega f \, \ud \mu
\end{align*}
is bilinear and one has
\begin{align*}
\abs[\bigg]{\int_\Omega f \, \ud \mu} \leq \sum_{i = 1}^n \norm{x_i} \norm{\mu(A_i)} \leq \sum_{i = 1}^n \norm{x_i} \abs{\mu}(A_i) = \int_\Omega \norm{f} \, \ud \abs{\mu} \leq \norm{f}_\infty \norm{\mu}.
\end{align*}
Hence, the map admits a continuous bilinear extension of norm of most one to the space $B(\Omega; E) \times M(\Omega; E')$. Here $B(\Omega; E)$ denotes the space of \textit{Bartle integrable functions}, which is the closure of $S(\Omega; E)$ in the space of bounded $E$-valued functions on $\Omega$ with respect to the supremum norm. The following dominated convergence theorem follows directly from the scalar one.

\begin{theorem}[Dominated Convergence Theorem] \label{theorem:dominated-convergence}
Let $E$ be a Banach space and $\mu \in M(\Omega; E')$. Further let $(f_n)_{n \in \N}$ be a bounded sequence in $B(\Omega; E)$ and $f \in B(\Omega; E)$ such that $f_n \to f$ pointwise. Then
\begin{align*}
\lim_{n \to \infty} \int_\Omega f_n \, \ud \mu = \int_\Omega f \, \ud \mu.
\end{align*}
\end{theorem}

\subsection{The vector-valued Riesz representation theorem} 

Let $\Omega$ be a $\sigma$-compact metrizable space, $\calB(\Omega)$ the Borel $\sigma$-algebra on $\Omega$ and $E$ some Banach space. Since $B(\Omega; E)$ is the closure of $S(\Omega; E)$ with respect to the supremum norm, one easily sees that $C_0(\Omega; E) \subseteq B(\Omega; E)$. This observation is interesting because it allows to use the concept of the Bartle integral to generalize the Riesz Representation Theorem to the space $C_0(\Omega; E)$ (cf.\ \cite{Brooks1974} or \cite[Appendix, Theorem~2.2.4]{Amann1995}). 

\begin{theorem} \label{theorem:riesz-representation-theorem}
The mapping
\begin{align*}
M(\Omega; E') \to C_0(\Omega; E)', \quad \mu \mapsto \bigg(f \mapsto \int_\Omega f \, \ud \mu \bigg)
\end{align*}
is a isometric isomorphism. In particular, $M(\Omega; E')$ is isometrically isomorphic to the dual of $C_0(\Omega; E)$.
\end{theorem}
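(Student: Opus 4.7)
\textbf{Plan for the proof of Theorem~\ref{theorem:riesz-representation-theorem}.} Write $\Phi$ for the proposed mapping $\mu \mapsto \phi_\mu$, where $\phi_\mu(f) := \int_\Omega f \, \ud \mu$. Linearity of $\Phi$ is immediate from the bilinearity of the Bartle integral. Since $C_0(\Omega; E) \subseteq B(\Omega; E)$ and the Bartle bound $\abs{\int_\Omega f \, \ud \mu} \leq \norm{f}_\infty \norm{\mu}$ holds on this larger space, $\phi_\mu$ defines an element of $C_0(\Omega; E)'$ with $\norm{\phi_\mu} \leq \norm{\mu}$. This establishes well-definedness and the easy half of the isometry.

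For the reverse inequality $\norm{\phi_\mu} \geq \norm{\mu}$, fix $\varepsilon > 0$, choose a finite Borel partition $A_1, \dots, A_n$ of $\Omega$ with $\sum_i \norm{\mu(A_i)} > \norm{\mu} - \varepsilon$, and pick $x_i \in E$ with $\norm{x_i} \leq 1$ and $\dual{x_i, \mu(A_i)} > \norm{\mu(A_i)} - \varepsilon/n$. The simple function $s := \sum_i \one_{A_i} x_i$ satisfies $\int_\Omega s \, \ud \mu > \norm{\mu} - 2\varepsilon$. The task is then to replace $s$ by a continuous function in $C_0(\Omega; E)$ of sup-norm at most one whose Bartle integral against $\mu$ is close to that of $s$. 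Since $\abs{\mu}$ is a finite Borel measure on the $\sigma$-compact metrizable space $\Omega$, it is inner regular, so one can select compact sets $K_i \subseteq A_i$ with $\abs{\mu}(A_i \setminus K_i) < \varepsilon/n$, separate them by disjoint open sets of compact closure and apply Urysohn to obtain continuous $\psi_i \in C_c(\Omega)$ with $0 \leq \psi_i \leq 1$, $\psi_i \equiv 1$ on $K_i$ and disjoint supports. Then $f := \sum_i \psi_i \otimes x_i \in C_0(\Omega; E)$ has $\norm{f}_\infty \leq 1$, and the Bartle estimate on $\abs{\mu}$ yields $\abs{\int_\Omega (f - s) \, \ud \mu} \leq 2 \abs{\mu}(\bigcup_i A_i \setminus K_i) < 2\varepsilon$, giving $\norm{\phi_\mu} \geq \norm{\mu} - 4\varepsilon$.

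For surjectivity, given $\phi \in C_0(\Omega; E)'$, for each $x \in E$ define $T_x \in C_0(\Omega)'$ by $T_x(g) := \phi(g \otimes x)$, which satisfies $\norm{T_x} \leq \norm{\phi}\norm{x}$. The classical scalar Riesz theorem for $C_0(\Omega)$ yields a unique regular complex Borel measure $\nu_x$ with $T_x(g) = \int_\Omega g \, \ud \nu_x$ and $\abs{\nu_x}(\Omega) = \norm{T_x}$. By uniqueness, $x \mapsto \nu_x$ is linear. For each $A \in \calB(\Omega)$, set $\mu(A)(x) := \nu_x(A)$; then $\mu(A) \in E'$ with $\norm{\mu(A)} \leq \norm{\phi}$, and $\sigma$-additivity of $\mu$ follows from the $\sigma$-additivity of each $\nu_x$ together with uniform boundedness. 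That $\mu \in M(\Omega; E')$ with $\norm{\mu} \leq \norm{\phi}$ is obtained by reversing the construction of step~2: given a finite partition $A_1, \dots, A_n$ and unit vectors $x_i$ nearly realising $\norm{\mu(A_i)} = \sup_{\norm{x}\leq 1} \abs{\dual{x, \mu(A_i)}}$, build the continuous test function $f = \sum_i \psi_i \otimes x_i$ as before, so that $\sum_i \norm{\mu(A_i)} \leq \abs{\phi(f)} + O(\varepsilon) \leq \norm{\phi} + O(\varepsilon)$. Finally, $\phi = \phi_\mu$ holds on all tensors $g \otimes x$ by construction, hence on finite sums and, by linearity and continuity plus the density $C_0(\Omega) \otimes E \subseteq C_0(\Omega; E)$ (via Stone--Weierstrass type arguments on each compact exhausting set), on all of $C_0(\Omega; E)$.

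\textbf{Main obstacle.} The delicate point is passing between the ``integration-theoretic'' norm $\norm{\mu} = \abs{\mu}(\Omega)$ and the ``functional-analytic'' norm $\norm{\phi_\mu}$, as this requires approximating the simple test functions that witness the variation of $\mu$ by elements of $C_0(\Omega; E)$ of comparable sup-norm without losing much mass under Bartle integration. This is what forces the use of inner regularity of $\abs{\mu}$ (available thanks to $\Omega$ being $\sigma$-compact metrizable) and a simultaneous Urysohn-style separation of the partition sets.
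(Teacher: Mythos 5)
The paper offers no proof of Theorem~\ref{theorem:riesz-representation-theorem} at all: it is quoted from the literature (the reference to Amann's appendix), so there is no in-paper argument to compare against. Your outline is the standard proof of the vector-valued Riesz/Singer representation theorem, and the isometry argument via inner regularity of $\abs{\mu}$ and a Urysohn-type approximation of the witnessing simple functions is the right idea. One step, however, contains a genuine gap. In the surjectivity part you assert that ``$\sigma$-additivity of $\mu$ follows from the $\sigma$-additivity of each $\nu_x$ together with uniform boundedness.'' This only yields weak$^\ast$ countable additivity: for disjoint sets $A_n$ you get $\dual{x, \mu(\bigcup_n A_n)} = \sum_n \dual{x, \mu(A_n)}$ for each fixed $x \in E$, and neither uniform boundedness nor the Orlicz--Pettis theorem (which upgrades \emph{weak}, not weak$^\ast$, countable additivity) converts this into convergence of the partial sums in the norm of $E'$, which is what membership in $M(\Omega; E')$ requires. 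The standard repair is to show that the variation $\abs{\mu}$, which you have already proved finite, is a \emph{regular} finitely additive Borel measure (this follows from the regularity of the scalar measures $\nu_x$ together with the same partition--Urysohn approximation you use for the isometry), and then to invoke Alexandroff's theorem that a finite, regular, finitely additive Borel measure is countably additive; norm $\sigma$-additivity of $\mu$ then follows from $\norm{\mu(\bigcup_{n > N} A_n)} \leq \abs{\mu}(\bigcup_{n > N} A_n) \to 0$.

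Three smaller points. First, the functions $\psi_i$ are supported in open neighbourhoods $U_i$ of $K_i$ that need not be contained in $A_i$, so your error estimate $\abs[\big]{\int_\Omega (f - s)\, \ud\mu} \leq 2\abs{\mu}(\bigcup_i A_i \setminus K_i)$ is not quite right as written; use outer regularity of $\abs{\mu}$ to shrink $U_i$ so that $\abs{\mu}(U_i \setminus K_i) < \varepsilon/n$ as well. Second, for complex scalars one should replace $x_i$ by $\theta_i x_i$ with $\abs{\theta_i} = 1$ so that $\dual{x_i, \mu(A_i)}$ is real and close to $\norm{\mu(A_i)}$. Third, both the scalar Riesz theorem for $C_0(\Omega)$ and the compactly supported Urysohn functions tacitly require $\Omega$ to be locally compact, which the hypothesis ``$\sigma$-compact metrizable'' does not literally guarantee; this is really an imprecision in the paper's statement rather than in your proof, and is harmless in the paper's applications where $\Omega$ is $[0,1]$ or $\R$.
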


\subsection{Convolution of measures and nilpotent shifts} 
Let $E$ be a Banach space. For measures $\mu \in M(\R)$ and $\nu \in M(\R; E')$ we define their convolutional product $\ast: M(\R) \times M(\R; E') \to M(\R; E')$ via
\begin{align} \label{eq:integration-against-convolution-of-measures}
\int_\R f \, \ud (\mu \ast \nu) = \int_\R \int_\R f(s + r) \, \ud \nu(s) \, \ud \mu(r)
\end{align}
for all $f \in S(\R; E)$. By density of $S(\R; E)$ in $B(\R; E)$ the identity above holds even for all $f \in B(\R; E)$.

Consider the \textit{left shift group} $(\tau_t)_{t \in \R}$ on the space $M(\R; E')$, given by 
\begin{align*}
\tau_t: M(\R; E') \to M(\R; E'), \quad  (\tau_t \mu)(A) := \mu(A - t).
\end{align*}
Since the left shift $(\tau_t)_{t \in \R}$ on $M(\R; E')$ is the dual semigroup of the right shift semigroup on $C_0(\R; E)$ by Theorem~\ref{theorem:riesz-representation-theorem}, which is well-known to be strongly continuous, $(\tau_t)_{t \in \R}$ is a weak$^\ast$-continuous semigroup on $M(\R; E')$. In particular, one has
\begin{align} \label{eq:integration-against-shifted-measure}
\int_\R f(s) \, \ud(\tau_t \mu)(s) = \int_\R f(s - t) \, \ud \mu(s)
\end{align}
for all $f \in C_0(\R; E)$.
As a straightforward application of \eqref{eq:integration-against-convolution-of-measures}~and~\eqref{eq:integration-against-shifted-measure} one obtains the following lemma which shows that we can express shifts by convolutions with Dirac measures.

\begin{lemma}
The identity $\delta_{-t} \ast \mu = \tau_t \mu$ holds for all $\mu \in M(\R; E')$ and $t \in \R$.
\end{lemma}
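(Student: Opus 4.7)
The plan is to verify the identity by pairing both sides against test functions in $C_0(\R; E)$ and invoking the vector-valued Riesz representation theorem~\ref{theorem:riesz-representation-theorem}, which guarantees that $M(\R; E')$ is the dual of $C_0(\R; E)$ isometrically; in particular, two elements of $M(\R; E')$ coincide as soon as they induce the same linear functional on $C_0(\R; E)$. So I would fix an arbitrary $f \in C_0(\R; E)$ and compute both $\int_\R f \, \ud(\delta_{-t} \ast \mu)$ and $\int_\R f \, \ud(\tau_t \mu)$, showing they agree.

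For the left-hand side, since $C_0(\R; E) \subseteq B(\R; E)$, the extended identity~\eqref{eq:integration-against-convolution-of-measures} applies to $f$, giving
\[
\int_\R f \, \ud(\delta_{-t} \ast \mu) = \int_\R \int_\R f(s + r) \, \ud \mu(s) \, \ud \delta_{-t}(r).
\]
The inner integral defines a bounded, complex-valued function $g(r) := \int_\R f(s + r) \, \ud \mu(s)$; the outer integral is then against the scalar Dirac measure $\delta_{-t}$, and standard properties of Dirac measures reduce it to $g(-t) = \int_\R f(s - t) \, \ud \mu(s)$. For the right-hand side, identity~\eqref{eq:integration-against-shifted-measure} directly gives $\int_\R f \, \ud(\tau_t \mu) = \int_\R f(s - t) \, \ud \mu(s)$. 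Hence both sides agree on every $f \in C_0(\R; E)$, and Theorem~\ref{theorem:riesz-representation-theorem} concludes $\delta_{-t} \ast \mu = \tau_t \mu$.

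There is no genuine obstacle in this argument; it is a direct unpacking of the two definitions against the Riesz duality. The only point deserving a line of justification is that the outer Bartle integral against $\delta_{-t}$ really collapses to evaluation at $-t$: this follows because $\delta_{-t}$ is concentrated at a single point, so that on simple functions $\sum_i \one_{A_i} x_i$ only the summand with $-t \in A_i$ contributes, and the general bounded case then follows by the uniform approximation defining $B(\R; E)$.
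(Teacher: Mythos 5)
Your argument is correct and is exactly the route the paper intends: the paper states the lemma as a ``straightforward application'' of identities \eqref{eq:integration-against-convolution-of-measures} and \eqref{eq:integration-against-shifted-measure}, and you carry this out by testing both measures against $f \in C_0(\R; E)$, collapsing the outer integral against $\delta_{-t}$ to evaluation at $-t$, and concluding equality via the injectivity of the Riesz isomorphism of Theorem~\ref{theorem:riesz-representation-theorem}. Nothing is missing.
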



%

Let $B$ be a closed subset of $\R$ and $\mu \in M(\R; E')$. Recall from that the measure $\mu\rvert_B \in M(B; E')$, defined by $\mu\rvert_B(A) := \mu(A \cap B)$, is called the \textit{restriction of $\mu$ to $B$}. Moreover, $\mu\rvert_B$ can be seen as an element of $M(\R; E')$ via identification with its canonical zero extension to $\R$. 

Using this identification, we can define the nilpotent left shift semigroup $(\tau_0(t))_{t \geq 0}$ on $M([0, 1]; E')$ by
\begin{align*}
(\tau_0(t) \mu)(A) := (\delta_{-t} \ast \mu\rvert_{[t, 1]})(A) = \mu((A \cap [t, 1]) - t),
\end{align*}
for all $A \in \calB([0, 1])$. Note that $\operatorname{supp}(\delta_{-t} \ast \mu\rvert_{[t, 1]}) \subseteq [0,1]$ for all $t \geq 0$ -- so the semigroup is indeed well-defined.

\begin{proposition} \label{prop:nilpotent-shift-semigroup-continuity}
Let $E$ be a separable Banach space. Then the nilpotent left shift semigroup $(\tau_0(t))_{t \geq 0}$ is weak$^\ast$-continuous on $M([0, 1]; E')$.
\end{proposition}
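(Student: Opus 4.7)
The plan is to reduce weak$^\ast$-continuity to the convergence of scalar integrals via the Riesz representation Theorem~\ref{theorem:riesz-representation-theorem}, which identifies the dual of $C_0([0,1]; E) = C([0,1]; E)$ with $M([0,1]; E')$. Hence, fixing $\mu \in M([0,1]; E')$ and $f \in C([0,1]; E)$, it suffices to show that the scalar function $t \mapsto \int_{[0,1]} f \, \ud (\tau_0(t)\mu)$ is continuous on $[0, \infty)$.

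First, I would bring this integral into a form where the $t$-dependence is confined to the integrand. Writing $\widetilde f \in B(\R; E)$ for the trivial zero extension of $f$ and using the identity $\delta_{-t} \ast \nu = \tau_t \nu$ from the preceding lemma together with formulas~\eqref{eq:integration-against-convolution-of-measures}~and~\eqref{eq:integration-against-shifted-measure}, one computes
\begin{align*}
\int_{[0,1]} f \, \ud(\tau_0(t)\mu)
&= \int_{\R} \widetilde f \, \ud(\delta_{-t} \ast \mu\rvert_{[t,1]}) \\
&= \int_{\R} \widetilde f(s-t) \, \ud \mu\rvert_{[t,1]}(s) \\
&= \int_{[0,1]} g_t(s) \, \ud \mu(s),
\end{align*}
where $g_t(s) := \one_{[t,1]}(s)\, f(s-t)$, and we have used that $\widetilde f(s-t) = f(s-t)$ for $s \in [t,1]$ since then $s-t \in [0,1]$.

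Now fix $t_0 \geq 0$ and a sequence $t_n \to t_0$. The functions $g_{t_n}$ lie in $B([0,1]; E)$ with $\norm{g_{t_n}}_\infty \leq \norm{f}_\infty$, and continuity of $f$ combined with the behaviour of the indicators $\one_{[t_n,1]}$ yields pointwise convergence $g_{t_n}(s) \to g_{t_0}(s)$ for every $s \in [0,1] \setminus \{t_0\}$. Exploiting the hypothesis that $E$ is separable, Bartle's bounded convergence Theorem~\ref{theorem:bartles-bounded-convergence-theorem} then delivers $\int g_{t_n} \, \ud \mu \to \int g_{t_0} \, \ud \mu$, which by the previous display is the desired weak$^\ast$-convergence. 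The main technical obstacle is the loss of pointwise convergence at the single point $s = t_0$, where the indicator $\one_{[t_n,1]}$ may jump; this is handled by noting that Bartle's theorem, being proved through Egorov's Theorem~\ref{theorem:egorov-theorem}, actually extends to convergence off an exceptional set of arbitrarily small control measure, and by treating the one-sided limits $t_n \uparrow t_0$ and $t_n \downarrow t_0$ separately so that the indicator varies monotonically in each case and the potential atomic contribution at $t_0$ is absorbed by the control measure estimate.
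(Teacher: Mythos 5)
Your strategy coincides with the paper's: dualize against $C([0,1];E)$ via Theorem~\ref{theorem:riesz-representation-theorem}, rewrite $\dual{f,\tau_0(t)\mu}$ as $\int_{[0,1]}\one_{[t,1]}(s)f(s-t)\,\ud\mu(s)$ through the convolution identities, and conclude with Bartle's bounded convergence theorem. The difference is one of scope: the paper only verifies $\dual{f,\tau_0(t)\mu-\mu}\to 0$ as $t\searrow 0$, whereas you aim for two-sided continuity of $t\mapsto\dual{f,\tau_0(t)\mu}$ at every $t_0\ge 0$, and it is exactly there that your argument has a gap.

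The gap is the claim that the exceptional point $s=t_0$ is ``absorbed by the control measure estimate.'' It cannot be. The control measure $\nu$ from \cite[Corollary~I.2.6]{Diestel1977} satisfies: for every $\varepsilon>0$ there is $\delta>0$ such that $\abs{\mu}(A)<\varepsilon$ whenever $\nu(A)<\delta$. Hence if $\abs{\mu}(\set{t_0})>0$, then $\nu(\set{t_0})$ is bounded away from $0$, so the singleton $\set{t_0}$ can never be swept into an Egorov-type set of small control measure; the atomic contribution is a fixed positive quantity, not one you can shrink. Moreover, the failure is not merely a defect of the method. If $\mu(A)=\one_A(t_0)\,x'$ for some $0\neq x'\in E'$ and $t_n\downarrow t_0$, then $\one_{[t_n,1]}(t_0)=0$ for all $n$, so $\int g_{t_n}\,\ud\mu=0$, while $\int g_{t_0}\,\ud\mu=\dual{f(0),x'}$; choosing $f$ with $\dual{f(0),x'}\neq 0$ shows the right limit genuinely fails at every point carrying an atom of $\mu$. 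By contrast, the left-sided limits need no repair: for $t_n\uparrow t_0$ one has $\one_{[t_n,1]}(s)\to\one_{[t_0,1]}(s)$ for \emph{every} $s\in[0,1]$, so $g_{t_n}\to g_{t_0}$ pointwise everywhere and Theorem~\ref{theorem:bartles-bounded-convergence-theorem} applies as stated. In other words, the one side on which your monotonicity remark is actually needed is the side on which it cannot work. Note that the paper's proof avoids this only by restricting to $t\searrow 0$, and even there the point $s=0$ plays the role of your $t_0$ (one has $g_t(0)=0$ for $t>0$ but $g_0(0)=f(0)$); so the obstruction you encountered is a genuine delicacy of the statement for measures with an atom at the relevant point, which the paper's argument passes over silently rather than resolves.
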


\begin{proof}
Let $f \in C([0, 1]; E)$ and consider its trivial extension
\begin{align*}
\widetilde f: \R \to E, \quad f(s) :=
\begin{cases}
f(s), \quad &\text{if } s \in [0, 1], \\
0, \quad &\text{else}.
\end{cases}
\end{align*}
Then $\widetilde f \in B(\R; E)$. So for $t \in [0, 1)$ one has
\begin{align*}
\abs{\dual{f, \tau_0(t) \mu - \mu}} &= \abs{\dual{\widetilde f, \tau_0(t) \mu} - \dual{f, \mu}} \\
&= \abs[\bigg]{\int_\R \widetilde f \, \ud(\delta_{-t} \ast \mu\rvert_{[t, 1]}) - \int_{[0, 1]} f \, \ud \mu} \\
&= \abs[\bigg]{\int_\R \int_\R \widetilde f(s + r) \, \ud \mu\rvert_{[t, 1]}(s) \, \ud \delta_{-t}(r) - \int_{[0, 1]} f(s) \, \ud \mu(s)} \\
&= \abs[\bigg]{\int_\R \widetilde f(s - t) \, \ud \mu\rvert_{[t, 1]}(s) - \int_{[0, 1]} f(s) \, \ud \mu(s)} \\
&= \abs[\bigg]{\int_{[t, 1 + t]} f(s - t) \, \ud \mu(s) - \int_{[0, 1]}f(s) \, \ud \mu(s)} \longrightarrow 0
\end{align*}
as $t \searrow 0$ by the dominated convergence theorem (Theorem~\ref{theorem:dominated-convergence}).
\end{proof}

\bibliographystyle{plain}
\bibliography{literature}

\end{document}